\newtheorem{proposition}{Proposition}
\newtheorem{lemma}[proposition]{Lemma}
\newtheorem{corollary}[proposition]{Corollary}
\def\xyPlain#1{%
\ThreeDput[normal=0 0 1](0,0,0){\psgrid[subgriddiv=0,gridcolor=lightgray](0,0)(#1,#1)\psline{->}(0,0)(0,#1) \psline{->}(0,0)(#1,0)
\ifdim\psk@gridlabels pt>\z@
\uput[180]{0.2}(0,#1){$y$}\uput[-90]{0.2}(#1,0){$x$}\fi }}
\def\xzPlain#1{\ThreeDput[normal=0 -1 0](0,0,0){\psgrid[subgriddiv=0,gridcolor=lightgray](0,0)(#1,#1) \psline{->}(0,0)(0,#1) \psline{->}(0,0)(#1,0)
\ifdim\psk@gridlabels pt>\z@
\uput[180]{0.2}(0,#1){$z$}\uput[-90]{0.2}(#1,0){$x$}%
\fi }}
\def\yzPlain#1{\ThreeDput[normal=1 0 0](0,0,0){\psgrid[subgriddiv=0,gridcolor=lightgray](0,0)(#1,#1) \psline{->}(0,0)(0,#1) \psline{->}(0,0)(#1,0)
\ifdim\psk@gridlabels pt>\z@
\uput[180]{0.2}(0,#1){$z$}\uput[-90]{0.2}(#1,0){$y$}%
\fi }}
\def\IIIDKOSystem{\@ifnextchar[{\IIIDKOSystem@i}{\IIIDKOSystem@i[]}}
\def\IIIDKOSystem@i[#1]#2{\psset{#1}\xyPlain{#2}\xzPlain{#2}\yzPlain{#2}}
\begin{document}

\keywords{Finite-difference scheme, consistency analysis, linear stability analysis, nonlinear wave equations, sine-Gordon equation, Klein-Gordon equation, nonlinear supratransmission}
\subjclass{34L30, 65L10,78A40}

\title[Numerical method with energy consistency for sine-Gordon]{A numerical method with properties of consistency in the energy domain for a class of dissipative nonlinear wave equations with applications to a Dirichlet boundary-value problem}

\author[J.~E.~Mac\'{\i}as-D\'{\i}az]{J.~E.~Mac\'{\i}as-D\'{\i}az}
\address{Departamento de Matem\'{a}ticas y F\'{\i}sica, Universidad Aut\'{o}noma de Aguascalientes, Avenida Universidad 940, Ciudad Universitaria, Aguascalientes, Ags. 20100, M\'{e}xico}
\email{jemacias@correo.uaa.mx}

\author[A.~Puri]{A.~Puri}
\address{Department of Physics, University of New Orleans, 2000 Lakeshore Drive, New Orleans, LA 70148}
\email{apuri@uno.edu}

\dedicatory{The first author dedicates this work to Prof. L\'{a}szl\'{o} Fuchs}

\begin{abstract}
In this work, we present a conditionally stable finite-difference scheme that consistently approximates the solution of a general class of $(3+1)$-dimensional nonlinear equations that generalizes in various ways the quantitative model governing discrete arrays consisting of coupled harmonic oscillators. Associated with this method, there exists a discrete scheme of energy that consistently approximates its continuous counterpart. The method has the properties that the associated rate of change of the discrete energy consistently approximates its continuous counterpart, and it approximates both a fully continuous medium and a spatially discretized system. Conditional stability of the numerical technique is established, and applications are provided to the existence of the process of nonlinear supratransmission in generalized Klein-Gordon systems and the propagation of binary signals in semi-unbounded, three-dimensional arrays of harmonic oscillators coupled through springs and perturbed harmonically at the boundaries, where the basic model is a modified sine-Gordon equation; our results show that a perfect transmission is achieved via the modulation of the driving amplitude at the boundary. Additionally, we present an example of a nonlinear system with a forbidden band-gap which does not present supratransmission, thus establishing that the existence of a forbidden band-gap in the linear dispersion relation of a nonlinear system is not a sufficient condition for the system to present supratransmission.
\end{abstract}

\maketitle

\section{Introduction}

Almost five years after the appearance of the pioneering letter by Geniet and Leon \cite {Geniet-Leon}, the phenomenon of nonlinear supratransmission has been studied widely in many one-dimensional, physical systems. The phenomenon consists in a sudden increase in the energy injected into a nonlinear system by a harmonic perturbation irradiating at a frequency in the forbidden band-gap, and the research in the field has concentrated mainly on discrete media such as mechanical chains of oscillators described by coupled sine-Gordon and Klein-Gordon equations \cite{Geniet-Leon}, coupled double sine-Gordon equations \cite{Geniet-Leon2}, Fermi-Pasta-Ulam nonlinear chains \cite{Khomeriki}, and Bragg media in the nonlinear Kerr regime \cite{Leon-Spire}. Nonetheless, some research has been done in the continuous case scenario, where the sine-Gordon equation has been a common denominator \cite{Khomeriki-Leon,Chevrieux2}. Meanwhile, from a pragmatic perspective, the importance of the process of nonlinear supratransmision has been evidenced through the many applications proposed to the design of digital amplifiers of ultra weak signals \cite{Khomeriki-Leon2}, light detectors sensitive to very weak excitations \cite{Chevriaux}, optical waveguide arrays \cite{Khomeriki2}, and light filters \cite{Khomeriki-Ruffo}. 

Of course, the problem in the numerical study of the process of nonlinear supratransmission lies in the development of a reliable computational technique to approximate consistently the solutions to the mixed-value problem, the local energy density of the system, and its total energy, in view of the fact that supratransmission is better characterized in the energy domain. Moreover, from a historically point of view the use of symplectic methods for Hamiltonian systems has proved to yield more than satisfactory results \cite{Sanz}; unfortunately, the medium we analyze in the present work contemplates the inclusion of internal and external damping terms which make it nonconservative in general. Nonetheless, the main part of our study will be devoted to develop a finite-difference scheme for the problem under analysis, together with a discrete scheme for the local energy density and the total energy of the system with consistency properties not only in the energy domain, but also in the domain of the rate of change of energy of the medium.

In general, the study of $(3 + 1)$-dimensional systems governed by sine-Gordon equations is an important problem in the physical sciences. For instance, ring-shaped solitary wave solutions of these type of systems have been numerically investigated to show ultimately that such solutions have quasi-soliton properties \cite {Ring-shaped}. The existence of multi-soliton and vortex-soliton solutions has been established for this model, too \cite {Multi-soliton}. $N$-layer sine-Gordon-type models have been studied in order to generalize the results obtained for the two-layer sine-Gordon model \cite {N-layer}, a model that has been used to describe the dynamics of high transition temperature superconductors \cite {Plasma}. Finally, the $(3 + 1)$-dimensional sine-Gordon equation has been used to explore the possibility of stable superluminal propagation of short electromagnetic excitations \cite {Superluminal}.

In Section \ref {Sec2} of this work, we present the $(3 + 1)$-dimensional problem under study in its most general form. The model includes the presence of internal and external damping, relativistic mass, and generalized Josephson currents. Here, we present the Lagrangian of the undamped case as well as an energy analysis of the system under study, and a statement of a similar problem in spherically symmetric media. Section \ref {Sec3} introduces the finite-difference schemes employed to approximate solutions of the mixed-value problem of interest, and the schemes used to approximate the local energy density and the total energy of the system. We establish that the discrete rate of change of energy is a consistent estimate of its continuous counterpart, and a stability condition is proved. In Section \ref {Sec4}, we show numerically that the process of nonlinear supratransmission is present in the semi-discrete system under scrutiny by means of an application of the method presented in this work. The relevance of our results will be shown when we demonstrate next that the system under study does not support supratransmission when the medium is radially perturbed at the origin, whence it will follow the existence of a forbidden band-gap for the frequency in the linear dispersion relation of a nonlinear system does not necessarily guarantee the presence of supratransmission in the medium. A second application to the generation and propagation of localized nonlinear modes in the system of interest is presented next, and we close our work with a section of concluding remarks.

\section{Mathematical models\label{Sec2}}

In the present section we introduce the two mathematical models under study in this work. Throughout, the nonnegative constants $\beta$ and $\gamma$ represent, respectively, the coefficients of internal and external damping of the medium, and the pure-real or pure-imaginary constant $\mathfrak {m}$ denotes a relativistic mass; this last parameter has been included to suggest further applications of our results to the field of particle physics \cite{Kudriatsev}. Moreover, the nonnegative value $J$ will be called \emph {generalized Josephson current}, and its inception has been realized with applications to superconductivity in mind \cite{Solitons}.

\subsection{Cartesian problem}

Let us represent the closure of the first octant of the Euclidean space $\mathbb {R} ^3$ by $D$, let $V$ be any continuously differentiable real function defined in all of $\mathbb {R}$, and assume that $u$ is a function of $( \mathbf {x} , t )$, where $\mathbf {x} \in D$ and $t \in \mathbb {R} ^+$. Under these circumstances, the medium studied in the present paper is described by the generalized partial differential equation
\begin{equation}
\frac {\partial ^2 u} {\partial t ^2} - \nabla ^2 u + \mathfrak {m} ^2 u + V ^\prime (u) - J = \beta \nabla ^2 \left( \frac {\partial u} {\partial t} \right) - \gamma \frac {\partial u} {\partial t}, \label{Eqn:Main}
\end{equation}
in which $\nabla ^2$ represents the Laplacian operator.

It is important to point out that \eqref {Eqn:Main} generalizes nonlinear partial differential equations such as the Klein-Gordon equation, the sine-Gordon equation, and the Landau-Ginzburg equation, amongst others. If $\mathfrak {m}$ is a pure-real number then a modified sine-Gordon model is obtained, for instance, when a potential of the form $V (u) = 1 - \cos u$ is considered, and a modified nonlinear Klein-Gordon equation results when $V (u) = \frac {1} {2 !} u ^2 - \frac {1} {4 !} u ^4$. Meanwhile, for every positive real number $\lambda$, a modified Landau-Ginzburg equation is obtained if $V (u) = \lambda u ^4$ when $\mathfrak {m}$ is a pure-imaginary number. 

This investigation considers particularly the study of the sine-Gordon and Klein-Gordon equations, two models that have been thoroughly studied in the literature \cite {Jorgens,Segal,Morawetz,Glassey,Barone,Scott}. The inclusion of the parameters $\beta$ and $\gamma$ in our model correspond to the need of considering generalizations of physically realistic models in which internal and external damping are present, such as problems arising in the study of long Josephson junctions between superconductors when dissipative effects are taken into account \cite {Solitons} or in the investigation of fluxons in Josephson transmission lines \cite {Lomdahl}. Mathematically, the study of sine-Gordon and Klein-Gordon systems where linear damping is present has lead to the discovery of weak solutions of these equations \cite {Rubino}, the proof of the existence of the maximal attractor in dissipative systems of Klein-Gordon-Schr\"{o}dinger equations \cite {Boling}, the discovery of the mechanism of the ratchet-like dynamics of solitons in dissipative Klein-Gordon media driven by a bi-harmonic force \cite {Morales}, the proof of the existence of multistabilities and soliton trapping in the damped Klein-Gordon equation with external periodic excitation via the asymptotic perturbation method \cite {Maccari}, amongst many other analytical results \cite {Ha,Biler,Pucci,Park,Bahuguna}.

In the case of conservative sine-Gordon and Klein-Gordon media described by \eqref {Eqn:Main} with a generalized Josephson current equal to zero, the linear dispersion relation is obtained when considering solutions in the linearized systems in the form of linear modes \cite {Zauderer}. In such cases, the dispersion relation adopts the form 
\begin{equation}
\omega ^2 (\xi , \zeta , \eta)= \mathfrak {m} ^2 + 1 + 4 \left(\sin ^2 \frac {\xi} {2} + \sin ^2 \frac {\zeta} {2} + \sin ^2 \frac {\eta} {2}\right),
\end{equation}
which possesses a forbidden band-gap given by $\Omega < \sqrt {\mathfrak {m} ^2 + 1}$. From a practical point of view, the parameter $\Omega$ will represent the frequency of the driving boundary in a harmonically perturbed system described by \eqref {Eqn:Main}.

Once the pragmatic importance of our model has been understood, we proceed to simplify it by letting 
\begin{equation}
G (u) = \frac {1} {2} \mathfrak {m} ^2 u ^2 + V (u) - J u.
\end{equation}
In these terms, the Lagrangian associated with the conservative portion of \eqref {Eqn:Main} and the corresponding Hamiltonian are
\begin{equation}
\mathcal {L} = \frac {1} {2} \left\{ \left( \frac {\partial u} {\partial t} \right) ^2 - \Vert \nabla u \Vert ^2 \right\} - G (u) \qquad \text {and} \qquad \mathcal {H} = \frac {1} {2} \left( \frac {\partial u} {\partial t} \right) ^2 + \frac {1} {2} \Vert \nabla u \Vert ^2 + G (u),
\end{equation}
respectively, where $\Vert \cdot \Vert$ represents the Euclidean norm in $\mathbb {R} ^3$. Moreover, it is easy to derive the following expression for the total energy of the system at any time $t$, in which the integrand is the local energy density:
\begin{equation}
E (t) = \iiint _D \mathcal {H} d \mathbf {x}. \label{EnergyEq1}
\end{equation}
Here, it is important to observe that the total energy of our problem is positive whenever $G$ is a nonnegative real function, for instance, when the system has no generalized Josephson current and $V (u) = u ^p$ with $p$ an even positive integer.

For computational reasons, we will restrict our study to bounded domains $D$ of the form $[0 , L] \times [0 , L] \times [0 , L]$, where $L$ is a positive constant. Moreover, we will assume that Neumann boundary data will be imposed on the sides of $D$ opposite to the origin. More precisely, we will assume that $\nabla u \cdot \hat {\mathrm {n}} = 0$ on the sides of $D$ opposite to the origin. Furthermore, it will be important to consider Dirichlet data of the form $u (\mathbf {x} , t) = \phi (t)$, where $\mathbf {x}$ are on the sides of $D$ adjacent to the origin.

\begin{proposition}
The instantaneous rate of change with respect to time of the total energy associated with the partial differential equation \eqref {Eqn:Main} in the region $D = [0 , L] ^3$ of $\mathbb {R} ^3$ with boundary data $\nabla u \cdot \hat {\mathrm {n}} = 0$ on the sides of $D$ opposite to the origin and Dirichlet condition on the sides $\partial D ^+$ adjacent to the origin, is given by
\begin{equation}
\begin{array}{rcl}
E ^\prime (t) & = &  \displaystyle {\iint _{\partial D ^+} \frac {\partial u} {\partial t} \nabla u \cdot \hat {\mathrm {n}} \, d \sigma - \gamma \iiint _D \left( \frac {\partial u} {\partial t} \right) ^2 d \mathbf {x}} \\
 & & \displaystyle {\quad + \beta \left\{ \iint _{\partial D ^+} \frac {\partial u} {\partial t} \frac {\partial} {\partial t} \left( \nabla u \cdot \hat {\mathrm {n}} \right) \, d \sigma - \iiint _D \left\Vert \nabla \left( \frac {\partial u} {\partial t} \right) \right\Vert ^2 d \mathbf {x} \right\}} .
 \end{array} \label {Eqn:DRateE}
\end{equation}
\label{Prop1}
\end{proposition}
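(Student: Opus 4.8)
The plan is to differentiate the total energy \eqref{EnergyEq1} directly under the integral sign and then eliminate every second-order spatial derivative by integration by parts, so that the bulk integrand can be rewritten through the governing equation \eqref{Eqn:Main}. First I would record that $G' (u) = \mathfrak{m}^2 u + V' (u) - J$, so that \eqref{Eqn:Main} reads $\partial_t^2 u - \nabla^2 u + G' (u) = \beta \nabla^2 (\partial_t u) - \gamma \partial_t u$, and differentiate the integrand $\mathcal{H} = \tfrac{1}{2} (\partial_t u)^2 + \tfrac{1}{2} \Vert \nabla u \Vert^2 + G (u)$ to obtain
\[
E' (t) = \iiint_D \left[ \partial_t u \, \partial_t^2 u + \nabla u \cdot \nabla (\partial_t u) + G' (u) \, \partial_t u \right] d \mathbf{x} .
\]

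The key manipulation is Green's first identity applied to the middle term: $\iiint_D \nabla u \cdot \nabla (\partial_t u) \, d \mathbf{x} = \iint_{\partial D} (\partial_t u)(\nabla u \cdot \hat{\mathrm{n}}) \, d \sigma - \iiint_D (\partial_t u) \nabla^2 u \, d \mathbf{x}$. Since $\nabla u \cdot \hat{\mathrm{n}} = 0$ on the faces of $D$ opposite to the origin, the surface integral collapses onto $\partial D^+$. Collecting the surviving volume terms produces the factor $\partial_t u \, [ \partial_t^2 u - \nabla^2 u + G' (u) ]$, which by the rewritten equation equals $\partial_t u \, [ \beta \nabla^2 (\partial_t u) - \gamma \partial_t u ]$. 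This yields the dissipation term $- \gamma \iiint_D (\partial_t u)^2 \, d \mathbf{x}$, the boundary term $\iint_{\partial D^+} (\partial_t u)(\nabla u \cdot \hat{\mathrm{n}}) \, d \sigma$, and a residual term $\beta \iiint_D (\partial_t u) \nabla^2 (\partial_t u) \, d \mathbf{x}$.

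To finish, I would integrate the residual $\beta$-term by parts once more with $w = \partial_t u$, using $\iiint_D w \, \nabla^2 w \, d \mathbf{x} = \iint_{\partial D} w (\nabla w \cdot \hat{\mathrm{n}}) \, d \sigma - \iiint_D \Vert \nabla w \Vert^2 \, d \mathbf{x}$. Because the Neumann datum holds for \emph{every} $t$, differentiating it in time gives $\nabla (\partial_t u) \cdot \hat{\mathrm{n}} = \partial_t (\nabla u \cdot \hat{\mathrm{n}}) = 0$ on the faces opposite to the origin, so once again only the contribution over $\partial D^+$ survives, leaving $\beta \left[ \iint_{\partial D^+} (\partial_t u) \, \partial_t (\nabla u \cdot \hat{\mathrm{n}}) \, d \sigma - \iiint_D \Vert \nabla (\partial_t u) \Vert^2 \, d \mathbf{x} \right]$. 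Assembling the three contributions reproduces exactly \eqref{Eqn:DRateE}.

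The calculus is routine; the step requiring the most care is the bookkeeping at the boundary: correctly splitting $\partial D$ into the Neumann faces (opposite to the origin) and the Dirichlet faces $\partial D^+$, and---crucially for the $\beta$-term---recognizing that the condition $\nabla u \cdot \hat{\mathrm{n}} = 0$ imposed for all $t$ may be differentiated in time, so that $\nabla (\partial_t u) \cdot \hat{\mathrm{n}}$ likewise vanishes there. I would also flag the regularity tacitly needed to interchange differentiation with integration and to invoke the divergence theorem, namely that $u$ is a classical solution sufficiently smooth on $D$.
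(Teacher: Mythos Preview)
Your argument is correct and follows essentially the same route as the paper: differentiate the energy, apply Green's first identity to trade $\nabla u\cdot\nabla(\partial_t u)$ for $-\partial_t u\,\nabla^2 u$ plus a boundary term, substitute the PDE, and then integrate the $\beta$-term by parts once more. Your explicit justification that the time-differentiated Neumann condition kills the boundary contribution on the far faces is a nice touch that the paper leaves implicit.
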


\begin{proof} Taking derivative with respect to time on both sides of \eqref{EnergyEq1}, using Green's first identity, and substituting equation \eqref {Eqn:Main} next, we obtain that 
\begin{eqnarray}
E ^\prime (t) & = & \iiint _D \frac {\partial u} {\partial t} \left\{ \frac {\partial ^2 u} {\partial t^2} + G ^\prime (u) \right\} d \mathbf {x} + \frac {1} {2} \iiint _D \frac {\partial} {\partial t} \Vert \nabla u \Vert ^2 d \mathbf {x} \nonumber \\
 & = & \iiint _D \frac {\partial u} {\partial t} \left\{ \frac {\partial ^2 u} {\partial t^2} - \nabla ^2 u + G ^\prime (u) \right\} d \mathbf {x} + \iint _{\partial D} \frac {\partial u} {\partial t} \nabla u \cdot \hat {\mathrm {n}} \, d \sigma \nonumber \\
 & = & \beta \iiint _D \frac {\partial u} {\partial t} \nabla ^2 \left( \frac {\partial u} {\partial t} \right) \, d \mathbf {x} - \gamma \iiint _D \left( \frac {\partial u} {\partial t} \right) ^2 d \mathbf {x} + \iint _{\partial D} \frac {\partial u} {\partial t} \nabla u \cdot \hat {\mathrm {n}} \, d \sigma. \nonumber
\end{eqnarray}
On the other hand, from Green's first identity we see that
\begin{equation}
\iiint _D \frac {\partial u} {\partial t} \nabla ^2 \left( \frac {\partial u} {\partial t} \right) \, d \mathbf {x} = \iint _{\partial D} \frac {\partial u} {\partial t} \frac {\partial} {\partial t} \left( \nabla u \cdot \hat {\mathrm {n}} \right) \, d \sigma - \iiint _D \left\Vert \nabla \left( \frac {\partial u} {\partial t} \right) \right\Vert ^2 d \mathbf {x}. \nonumber
\end{equation}
The surface integrals in these last two equations are equal to zero on the three sides of $D$ opposite to the origin, whence the result follows.
\end{proof}

It is worth noticing that, in view of the hypotheses of Proposition \ref {Prop1}, more concrete expressions for some terms in \eqref {Eqn:DRateE} are readily at hand. Particularly, it is convenient to observe that  
\begin{equation}
\begin{array}{rcl}
\displaystyle {\iint _{\partial D ^+} \frac {\partial u} {\partial t} \nabla u \cdot \hat {\mathrm {n}} \, d \sigma} & = & - \displaystyle {\int _0 ^L \int _0 ^L \frac {\partial u} {\partial t} (0 , y , z) \frac {\partial u} {\partial x} (0 , y , z) \, d y \, d z} \\
 & & \displaystyle {\quad - \int _0 ^L \int _0 ^L \frac {\partial u} {\partial t} (x , 0 , z) \frac {\partial u} {\partial y} (x , 0 , z) \, d x \, d z} \\
 & & \displaystyle {\qquad - \int _0 ^L \int _0 ^L \frac {\partial u} {\partial t} (x , y , 0) \frac {\partial u} {\partial z} (x , y , 0) \, d x \, d y}.
\end{array}
\end{equation}
Similarly, 
\begin{equation}
\begin{array}{rcl}
\displaystyle {\iint _{\partial D ^+} \frac {\partial u} {\partial t} \frac {\partial} {\partial t} (\nabla u \cdot \hat {\mathrm {n}}) \, d \sigma} & = & - \displaystyle {\int _0 ^L \int _0 ^L \frac {\partial u} {\partial t} (0 , y , z) \frac {\partial ^2 u} {\partial t \, \partial x} (0 , y , z) \, d y \, d z} \\
 & & \displaystyle {\quad - \int _0 ^L \int _0 ^L \frac {\partial u} {\partial t} (x , 0 , z) \frac {\partial ^2 u} {\partial t \, \partial y} (x , 0 , z) \, d x \, d z} \\
 & & \displaystyle {\qquad - \int _0 ^L \int _0 ^L \frac {\partial u} {\partial t} (x , y , 0) \frac {\partial ^2 u} {\partial t \, \partial z} (x , y , 0) \, d x \, d y}.
\end{array}
\end{equation}
It is also important to notice that if $\beta = \gamma = 0$ and if either $\frac {\partial u} {\partial t} = 0$ or $\nabla u \cdot \hat {\mathrm {n}} = 0$ on $\partial D ^+$, then the energy of the system is conserved throughout time.

\subsection{Spherical problem}

As usual, let $D$ be the closure of the first octant in $\mathbb {R} ^3$, and assume that $u$ is a radially symmetric solution of \eqref {Eqn:Main}. Let $r = \Vert \mathbf {x} \Vert$ represent the Euclidean norm of the vector $\mathbf {x} \in \mathbb {R} ^3$, let $v (r , t) = r u (r , t)$, and assume that $u$ is a solution of problem \eqref {Eqn:Main} for Dirichlet boundary data in the origin given by $u (\mathbf {0} , t) = \phi (t)$. Then $v$ satisfies the relation $v (0 , t) = 0$, together with the partial differential equation 
\begin{equation}
\frac {\partial ^2 v} {\partial t ^2} - \frac {\partial ^2 v} {\partial r ^2} + \mathfrak {m} ^2 v + r V ^\prime \left( \frac {v} {r} \right) - J r = \beta \frac {\partial ^3 v} {\partial t \, \partial r ^2} - \gamma \frac {\partial v} {\partial t}. \label{Eqn:MainRad}
\end{equation}

Computationally and for the remainder of the present section, the region $D$ will represent the closure of the portion of the solid sphere with center in the origin and radius equal to $L$ that lies in the first octant, and Neumann boundary data will be imposed on the boundary of the region. Moreover, since the Dirichlet boundary condition of $u$ at the origin translates into a void condition for $v$ (which in turn translates into a trivial solution for problem \eqref {Eqn:MainRad} when vanishing initial conditions are chosen and the Josephson current is equal to zero), we set $v (\epsilon , t ) = \epsilon \phi (t)$ for some $\epsilon > 0$ sufficiently close to zero.

Under the presence of spherical symmetry and assuming that $u (L , t) = 0$ for every $t \in \mathbb {R} ^+$, the energy expression \eqref {EnergyEq1} of the undamped system may be computed in terms of $v$ via the following expression:
\begin{equation}
E (t) = \frac {\pi} {2} \int _0 ^L \left\{ \frac {1} {2} \left( \frac {\partial v} {\partial t} \right) ^2 + \frac {1} {2} \left( \frac {\partial v} {\partial r} \right) ^2 + r ^2 G \left( \frac {v} {r} \right) \right\} \, d r. \label {Eqn:RadEnergy}
\end{equation}
This follows immediately after noticing that $\left( \frac {\partial v} {\partial r} \right) ^2 = \left( r \frac {\partial u} {\partial r} \right) ^2 + \frac {\partial} {\partial r} (r u ^2)$. Here, $G$ is the simplified form of the potential function provided in the previous subsection. Moreover, in view of Proposition \ref {Prop1}, a rate of change of the total energy of system \eqref {Eqn:MainRad} is readily established as our next analytical result.

\begin{proposition}
The instantaneous rate of change of the total energy of a system satisfying \eqref {Eqn:MainRad} on the set $D$, with Dirichlet boundary condition at the origin, and Neumann data $\nabla u \cdot \hat {\mathrm {n}} = 0$ and Dirichlet data $u (L , t) = 0$ in the intersection of $D$ with the set of $\mathbf {x} \in \mathbb {R} ^3$ such that $\Vert \mathbf {x} \Vert = L$, is provided by the formula
\begin{equation}
E ^\prime (t) = - \frac {\pi} {2} \int _0 ^L \left\{ \beta \left( \frac {\partial ^2 v} {\partial t \, \partial r} - \frac {1} {r} \frac {\partial v} {\partial t} \right) ^2 + \gamma \left( \frac {\partial v} {\partial t} \right) ^2 \right\} \, d r.
\end{equation}
\end{proposition}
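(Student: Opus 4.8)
The plan is to differentiate the radial energy \eqref{Eqn:RadEnergy} directly in time and then eliminate the second time-derivative of $v$ using the radial equation \eqref{Eqn:MainRad}, mirroring the mechanism behind Proposition \ref{Prop1}. First I would differentiate under the integral sign; the only nonobvious term is the potential contribution, for which $\frac{\partial}{\partial t}\left[r^2 G(v/r)\right] = r\,G'(v/r)\,\partial_t v$. This produces
\[
E'(t) = \frac{\pi}{2}\int_0^L\left\{\partial_t v\,\partial_t^2 v + \partial_r v\,\partial_{tr}^2 v + r\,G'(v/r)\,\partial_t v\right\}dr.
\]

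Next I would rewrite \eqref{Eqn:MainRad} in the compact form $\partial_t^2 v = \partial_r^2 v - r\,G'(v/r) + \beta\,\partial_{trr}^3 v - \gamma\,\partial_t v$, using that $\mathfrak{m}^2 v + r V'(v/r) - Jr = r\,G'(v/r)$ by the definition of $G$. Substituting this for $\partial_t^2 v$ makes the two $r\,G'(v/r)\,\partial_t v$ terms cancel, leaving the purely quadratic integrand $\partial_t v\,\partial_r^2 v + \partial_r v\,\partial_{tr}^2 v + \beta\,\partial_t v\,\partial_{trr}^3 v - \gamma(\partial_t v)^2$. The first two terms combine into the exact $r$-derivative $\partial_r(\partial_t v\,\partial_r v)$, so upon integration they contribute only the boundary term $\left[\partial_t v\,\partial_r v\right]_0^L$. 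I would check that this term vanishes: at $r=L$ the conditions $u(L,t)=0$ and $\nabla u\cdot\hat{\mathrm{n}}=0$ give $\partial_t v(L,t)=L\,\partial_t u(L,t)=0$ and $\partial_r v(L,t)=u(L,t)+L\,\partial_r u(L,t)=0$, while at the origin the identity $v=ru$ forces $v(0,t)=0$ and hence $\partial_t v(0,t)=0$ (this is exactly why the Dirichlet condition at the origin produces no boundary contribution).

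The decisive step is the treatment of the $\beta$-term $\beta\int_0^L \partial_t v\,\partial_{trr}^3 v\,dr$ and its recasting into the squared radial-gradient form appearing in the statement. Writing $w=\partial_t v$ and $\psi=\partial_t u$, so that $w=r\psi$, the central observation is the algebraic identity $\partial_r w-\tfrac1r w=r\,\partial_r\psi$, that is, $\partial_{tr}^2 v-\tfrac1r\partial_t v = r\,\partial_r\psi$. Expanding $\partial_r^2 w = 2\partial_r\psi + r\,\partial_r^2\psi$ and integrating by parts in $\psi$ turns $\beta\int_0^L w\,\partial_r^2 w\,dr$ into $-\beta\int_0^L r^2(\partial_r\psi)^2\,dr$ plus the boundary term $\bigl[r^2\psi\,\partial_r\psi\bigr]_0^L$. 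This boundary term vanishes: the $r^2$ weight kills the contribution at the origin, and $\psi(L,t)=\partial_t u(L,t)=0$ kills it at $r=L$. Invoking the identity, $r^2(\partial_r\psi)^2=\bigl(\partial_{tr}^2 v-\tfrac1r\partial_t v\bigr)^2$, which is precisely the required integrand.

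Collecting the three contributions — the vanishing conservative flux, the $\beta$-term, and the dissipation $-\gamma\int_0^L(\partial_t v)^2\,dr$ — and multiplying by $\pi/2$ yields the claimed formula. I expect the only genuine obstacle to be the $\beta$-term: one must pass through the substitution $v=ru$ both to see that the apparently singular factor $1/r$ is harmless (the $r^2$ weight controls the origin) and to recognize that the integration-by-parts error terms cancel exactly, reorganizing a plain $\int(\partial_r w)^2$ into the geometrically meaningful $\int\bigl(\partial_{tr}^2 v - \tfrac1r\partial_t v\bigr)^2$. As a consistency check, the same result follows from Proposition \ref{Prop1}: for a radially symmetric solution every surface integral vanishes (Neumann on the spherical face, radial symmetry on the coordinate planes where $\hat{\mathrm{n}}$ is orthogonal to the radial gradient, and measure zero at the origin), and converting the two surviving volume integrals via $\iiint_D f(r)\,d\mathbf{x}=\tfrac{\pi}{2}\int_0^L f(r)\,r^2\,dr$ reproduces exactly the two terms above.
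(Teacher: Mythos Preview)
Your argument is correct. The direct radial computation---differentiate \eqref{Eqn:RadEnergy}, substitute \eqref{Eqn:MainRad}, recognize $\partial_t v\,\partial_r^2 v + \partial_r v\,\partial_{tr}^2 v$ as an exact $r$-derivative, and then use the substitution $w=r\psi$ to convert $\int w\,\partial_r^2 w\,dr$ into $-\int r^2(\partial_r\psi)^2\,dr$---goes through cleanly, and your handling of the boundary terms is accurate.

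The paper, however, does not argue this way at all: its entire proof is the one-line appeal ``It follows directly from Proposition~\ref{Prop1} and the substitution $v(r,t)=ru(r,t)$,'' which is precisely what you relegate to a final consistency check. So your primary route is genuinely different from the paper's. The paper's approach is more economical---the three-dimensional energy identity has already been established, and radial symmetry plus the boundary data kill every surface integral, so one only needs to rewrite the two surviving volume integrals via $\iiint_D f(r)\,d\mathbf{x}=\tfrac{\pi}{2}\int_0^L f(r)\,r^2\,dr$ and the relation $r\,\partial_r u_t = \partial_{tr}^2 v - \tfrac{1}{r}\partial_t v$. Your direct computation, by contrast, is self-contained (it never leaves the one-dimensional radial setting) and makes explicit exactly how the apparently singular $1/r$ factor arises and why it is harmless; this is arguably more informative, at the cost of repeating in radial coordinates the integration-by-parts mechanism already carried out in Proposition~\ref{Prop1}.
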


\begin{proof}
It follows directly from Proposition \ref {Prop1} and the substitution $v (r , t) = r u (r , t)$.
\end{proof}

In this case we must observe that the Neumann boundary data will take the form $\frac {\partial u} {\partial r} = 0$ on the curved side of the wedge $D$. In terms of the variable $v$, this condition translates into the equation
\begin{equation}
\frac {\partial v} {\partial r} + \frac {v} {r} = 0. \label{Eqn:BoundRad}
\end{equation}

\subsection{Discrete problem}

In this section, we introduce a model that describes the dynamics of a discrete system of pendula attached springs. Let $u _{m , n , p}$ be a real function on the real variable $t$, for every $m , n , p \in \mathbb {Z} ^+ \cup \{ 0 \}$ and every $t \geq 0$. We will consider now the infinite system of coupled ordinary differential equations with constant coupling coefficient $c > 0$, in which $m , n , p \in \mathbb {Z} ^+$:
\begin{equation}
\ddot {u} _{m , n , p} - c ^2 \nabla ^2 u _{m , n , p} + \mathfrak {m} ^2 u _{m , n , p} + V ^\prime ( u _{m , n , p} ) - J = \beta \nabla ^2 \dot {u} _{m , n , p} - \gamma \dot {u} _{m , n , p} \label{Eqn:DiscreteMain}
\end{equation}
Here, the discrete Laplacian operator $\nabla ^2$, the Hamiltonian $H$ of the lattice site at position $(m , n , p)$ for the conservative theory, and the total energy $E$ of the system are, respectively,
\begin{eqnarray}
\nabla ^2 u _{m , n , p} & = & \displaystyle {u _{m + 1 , n , p} + u _{m - 1 , n , p} + u _{m , n + 1 , p} + u _{m , n - 1 , p} + u _{m , n , p + 1} + u _{m , n , p - 1} - 6 u _{m , n , p},} \nonumber \\
H _{m , n , p} & = & \displaystyle {\frac {1} {2} \left\{ \dot {u} _{m , n , p} ^2 + c ^2 (u _{m + 1 , n , p} - u _{m , n , p}) ^2 + c ^2 (u _{m , n + 1 , p} - u _{m , n , p}) ^2 \right.} \nonumber \\
 & & \displaystyle {\quad \left. + c ^2 (u _{m , n , p + 1} - u _{m , n , p}) ^2 + \mathfrak {m} ^2 u _{m , n , p} ^2 \right\} + V (u _{m , n , p}) - J u _{m , n , p},}  \\
E & = & \displaystyle {\sum _{m , n , p = 1} ^N H _{m , n , p} + \frac {c ^2} {2} \left[\sum _{n , p = 1} ^N (u _{1 , n , p} - u _{0 , n , p}) ^2 + \sum _{m , p = 1} ^N (u _{m , 1 , p} - u _{m , 0 , p}) ^2 \right.} \nonumber \\
 & & \displaystyle {\quad \left. + \sum _{m , n = 1} ^N (u _{m , n , 1} - u _{m , n , 0}) ^2 \right].} \nonumber
\end{eqnarray}
It is important to remark here that the inclusion of the terms multiplied by $\frac {c ^2} {2}$ in the discrete energy corresponds to the need to include the potential from the coupling of the nodes adjacent to the boundary.

For the sake of convenience, we introduce the notation
\begin{equation}
\begin{array}{rcl}
\delta _x u _{m , n , p} & = & u _{m + 1 , n , p} - u _{m , n , p},\\ 
\delta _y u _{m , n , p} & = & u _{m , n + 1 , p} - u _{m , n , p}, \\
\delta _z u _{m , n , p} & = & u _{m , n , p + 1} - u _{m , n , p}. 
\end{array}
\end{equation}
Moreover, for computational reasons we will assume that $m$, $n$ and $p$ take on values in the set $\{ 0 , 1 , \dots , N + 1 \}$ for a relatively large positive integer $N$, and assume that discrete Neumann boundary data are imposed on the boundaries $n = N + 1$, $m = N + 1$ and $p = N + 1$, that is, we assume that 
\begin{equation}
\delta _x u _{N , m , p} = \delta _y u _{m , N , p} = \delta _z u _{m , n , N} = 0,
\end{equation}
for every $m , n , p \in \{ 1 , \dots , N \}$. Meanwhile, Dirichlet data will be required on the remaining boundaries. 

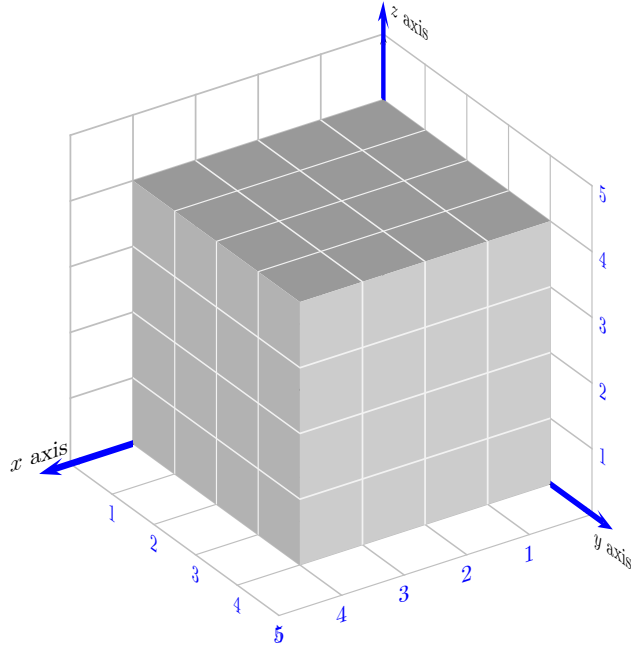
\begin{figure}
\centerline{%
\begin{pspicture}(-4.5,-3.5)(3,4.75)
\newgray{gray75}{0.6}\newgray{gray85}{0.7}\newgray{gray95}{0.8}\newgray{gray05}{0.95}
\psset{viewpoint=1 1.5 1}
\IIIDKOSystem[gridlabels=0pt,gridcolor=lightgray,subgriddiv=0]{5}%
\ThreeDput[normal=1 0 0]{\psline[linewidth=3pt,linecolor=blue]{->}(4,0)(5.5,0)\uput[270](5.5,0){\psscalebox{1 1}{\color{black}$y$ axis}}}%
\ThreeDput[normal=0 -1 0]{\psline[linewidth=3pt,linecolor=blue]{->}(4,0)(5.5,0)\uput[90](5.5,0){\psscalebox{-1 1}{\textcolor{black}{$x$ axis}}}}%
\ThreeDput[normal=1 0 0]{\psline[linewidth=3pt,linecolor=blue]{->}(0,4)(0,5.5)\uput[0](0,5.5){$z$ axis}}%
\ThreeDput[normal=0 0 1](0,0,4){\psframe*[linecolor=gray75](4,4)\rput(2,2){}}%
\ThreeDput[normal=0 1 0](4,4,0){\psframe*[linecolor=gray95](4,4)\rput(2,2){}}%
\ThreeDput[normal=1 0 0](4,0,0){\psframe*[linecolor=gray85](4,4)\rput(2,2){}}%
\ThreeDput[normal=0 0 1](0,0,4){\psline[linecolor=gray05](4,1)(0,1)\psline[linecolor=gray05](1,4)(1,0)%
	\psline[linecolor=gray05](4,2)(0,2)\psline[linecolor=gray05](2,4)(2,0)%
	\psline[linecolor=gray05](4,3)(0,3)\psline[linecolor=gray05](3,4)(3,0)}%
\ThreeDput[normal=0 1 0](4,4,0){\psline[linecolor=gray05](4,1)(0,1)\psline[linecolor=gray05](1,4)(1,0)%
	\psline[linecolor=gray05](4,2)(0,2)\psline[linecolor=gray05](2,4)(2,0)%
	\psline[linecolor=gray05](4,3)(0,3)\psline[linecolor=gray05](3,4)(3,0)}%
\ThreeDput[normal=1 0 0](4,0,0){\psline[linecolor=gray05](4,1)(0,1)\psline[linecolor=gray05](1,4)(1,0)%
	\psline[linecolor=gray05](4,2)(0,2)\psline[linecolor=gray05](2,4)(2,0)%
	\psline[linecolor=gray05](4,3)(0,3)\psline[linecolor=gray05](3,4)(3,0)}%
\ThreeDput[normal=0 1 0](1,5,0){\uput[270](0,0){\color{blue}$1$}}%
\ThreeDput[normal=0 1 0](2,5,0){\uput[270](0,0){\color{blue}$2$}}%
\ThreeDput[normal=0 1 0](3,5,0){\uput[270](0,0){\color{blue}$3$}}%
\ThreeDput[normal=0 1 0](4,5,0){\uput[270](0,0){\color{blue}$4$}}%
\ThreeDput[normal=0 1 0](5,5,0){\uput[270](0,0){\color{blue}$5$}}%
\ThreeDput[normal=1 0 0](5,1,0){\uput[270](0,0){\color{blue}$1$}}%
\ThreeDput[normal=1 0 0](5,2,0){\uput[270](0,0){\color{blue}$2$}}%
\ThreeDput[normal=1 0 0](5,3,0){\uput[270](0,0){\color{blue}$3$}}%
\ThreeDput[normal=1 0 0](5,4,0){\uput[270](0,0){\color{blue}$4$}}%
\ThreeDput[normal=1 0 0](5,5,0){\uput[270](0,0){\color{blue}$5$}}%
\ThreeDput[normal=1 0 0](0,5,1){\uput[0](0,0){\color{blue}$1$}}%
\ThreeDput[normal=1 0 0](0,5,2){\uput[0](0,0){\color{blue}$2$}}%
\ThreeDput[normal=1 0 0](0,5,3){\uput[0](0,0){\color{blue}$3$}}%
\ThreeDput[normal=1 0 0](0,5,4){\uput[0](0,0){\color{blue}$4$}}%
\ThreeDput[normal=1 0 0](0,5,5){\uput[0](0,0){\color{blue}$5$}}%
\end{pspicture}}%
\caption{Schematic representation of a medium governed by system \eqref {Eqn:DiscreteMain}. The nodes located in sites $(m , n , p)$ for $m , n , p \in \mathbb {Z} ^+ \cup \{ 0 \}$ may physically represent harmonic oscillators, and the attaching segments of line play the roles of springs. \label{Fig0}}
\end{figure}

System \eqref {Eqn:DiscreteMain} describes the evolution of a semi-unbounded, three-dimensional array of harmonic oscillators coupled through identical springs with a coupling coefficient equal to $c$. The pendula are located at the discrete sites $(m , n , p)$, where $m , n , p \in \mathbb {Z} ^+$, and the springs are parallel to a coordinate axis. Evidently, site $(m , n , p)$ is coupled with the six sites $(m \pm 1 , n , p)$, $(m , n \pm 1, p)$ and $(m , n , p \pm 1)$, and $c$ represents the common coupling coefficient. Moreover, in practice we will subject the oscillators on the boundaries to harmonic driving in the form of the Dirichlet conditions 
\begin{equation}
u _{m , n , 0} = u _{m , 0 , p} = u _{0 , n , p} = A \sin (\Omega t),
\end{equation}
where $\Omega$ is a frequency in the forbidden band-gap of the continuous-limit medium. A schematic representation of such a system is depicted in Fig. \ref {Fig0}. In this context, it is important to notice that the term with coefficient $\frac {c ^2} {2}$ in the expression $E$ for the total energy of the system corresponds to the potential energy due to the coupling to the driving boundary.

Our next result is a one-dimensional version of Green's first identity. For a proof, we refer to \cite {Macias-Supra}.

\begin{lemma} [Green's discrete first identity]
For every sequence $(a _n) _{n = 0} ^{N + 1}$ for which $a _{N + 1} = a _N$,
$$
\sum _{n = 1} ^N (a _{n + 1} - 2 a _n + a _{n - 1}) a _n = a _0 (a _0 - a _1) - \sum _{n = 1} ^N (a _n - a _{n - 1}) ^2. \qedhere\qed
$$ \label{Lemma:Green}
\end{lemma}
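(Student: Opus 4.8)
The statement is the discrete analogue of the integration-by-parts formula $\int_0^L u'' u\,dx = [u'u]_0^L - \int_0^L (u')^2\,dx$, so the plan is to mimic integration by parts at the level of finite sums (i.e.\ Abel summation). The natural first move is to express the discrete second difference as the difference of two consecutive first differences. Writing $d_n = a_n - a_{n-1}$, I would record the elementary identity $a_{n+1} - 2a_n + a_{n-1} = d_{n+1} - d_n$, which recasts the left-hand side as $\sum_{n=1}^N (d_{n+1} - d_n)\,a_n$.

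Next I would carry out the summation by parts. Splitting this as $\sum_{n=1}^N d_{n+1} a_n - \sum_{n=1}^N d_n a_n$ and reindexing the first sum by $n \mapsto n-1$ (so that it runs from $n=2$ to $N+1$ with summand $d_n a_{n-1}$), the two sums can be combined over their common range. The terms with index between $2$ and $N$ merge into $\sum_{n=2}^N d_n(a_{n-1}-a_n) = -\sum_{n=2}^N d_n^2$, since $a_{n-1}-a_n = -d_n$, while the extreme indices leave the boundary contributions $d_{N+1}a_N$ (from the top of the reindexed sum) and $-d_1 a_1$ (from the bottom of the second sum).

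At this point I would invoke the hypothesis $a_{N+1} = a_N$, which forces $d_{N+1} = 0$ and annihilates the upper boundary term. What remains is $-d_1 a_1 - \sum_{n=2}^N d_n^2$, and I would reinstate the missing $n=1$ term in the square-sum via $\sum_{n=2}^N d_n^2 = \sum_{n=1}^N d_n^2 - d_1^2$. The leftover lower-boundary piece then collapses: $-d_1 a_1 + d_1^2 = d_1(d_1 - a_1) = (a_1-a_0)(-a_0) = a_0(a_0-a_1)$, yielding exactly $a_0(a_0-a_1) - \sum_{n=1}^N (a_n-a_{n-1})^2$, as claimed.

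There is no genuine obstacle here---the identity is an elementary consequence of discrete summation by parts---so the only point demanding care is the bookkeeping of the index shift and the two boundary terms. The one delicate feature is that the Neumann-type condition $a_{N+1}=a_N$ removes only the top boundary term; the bottom term does \emph{not} vanish and must be simplified by the short algebraic manipulation above to produce the stated $a_0(a_0-a_1)$, which is precisely the discrete trace that will feed into the Dirichlet-type data imposed near the origin.
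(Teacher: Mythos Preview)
Your argument is correct: the summation-by-parts step with $d_n=a_n-a_{n-1}$, the reindexing, the use of $a_{N+1}=a_N$ to kill $d_{N+1}a_N$, and the final simplification $-d_1a_1+d_1^2=a_0(a_0-a_1)$ all check out and yield the stated identity.

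As for comparison, the paper does not actually prove this lemma; it merely states it and refers the reader to \cite{Macias-Supra}. Your self-contained Abel-summation argument is therefore more than the paper itself provides, and it is exactly the standard route one would expect for this discrete Green identity.
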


\begin{proposition}
Consider a system satisfying \eqref {Eqn:DiscreteMain} for $m , n , p = 1 , 2 , \dots , N + 1$, subject to discrete Neumann conditions of the form $\delta _x u _{N , m , p} = \delta _y u _{m , N , p} = \delta _z u _{m , n , N} = 0$ on the boundaries $n = N + 1$, $m = N + 1$, and $p = N + 1$, and subject to Dirichlet data on the remaining boundaries. Then, the instantaneous rate of change of the energy of the node in site $(m , n , p)$ with respect to time is given by 
\begin{eqnarray}
\frac {d E} {d t} & = & - c ^2 \sum _{i , j = 1} ^N \left[ (\delta _x u _{0 , i , j}) \dot {u} _{0 , i , j} + (\delta _y u _{i , 0 , j}) \dot {u} _{i , 0 , j} + (\delta _z u _{i , j , 0}) \dot {u} _{i , j , 0} \right] \nonumber \\
 & & \quad - \beta \left\{ \sum _{m , n , p = 1} ^N \left[ (\delta _x \dot {u} _{m - 1 , n , p}) ^2 + (\delta _y \dot {u} _{m , n - 1 , p}) ^2 + (\delta _z \dot {u} _{m , n , p - 1}) ^2 \right] \right. \\
 & & \qquad \left. + \sum _{i , j = 1} ^N \left[ (\delta _x \dot {u} _{0 , i , j}) \dot {u} _{0 , i , j} + (\delta _y \dot {u} _{i , 0 , j}) \dot {u} _{i , 0 , j} + (\delta _z \dot {u} _{i , j , 0}) \dot {u} _{i , j , 0} \right] \right\} - \gamma \sum _{m , n , p = 1} ^N (\dot {u} _{m , n , p}) ^2. \nonumber
\end{eqnarray} \label{Prop:2-4}
\end{proposition}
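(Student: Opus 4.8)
The plan is to differentiate the total energy $E$ termwise in $t$, use the lattice equation \eqref{Eqn:DiscreteMain} to remove the accelerations $\ddot u_{m,n,p}$, and then collapse each of the resulting spatial sums to boundary contributions by summation by parts in each coordinate direction, i.e.\ by the identity underlying Lemma \ref{Lemma:Green} (used in its bilinear polarized form for the conservative terms and directly for the dissipative ones).

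First I would compute $\frac{dE}{dt}$. Differentiating $H_{m,n,p}$ yields the kinetic term $\dot u_{m,n,p}\ddot u_{m,n,p}$, the three coupling cross terms $c^2(\delta_x u_{m,n,p})(\delta_x\dot u_{m,n,p})$ together with their $\delta_y,\delta_z$ analogues, and the potential term $G'(u_{m,n,p})\dot u_{m,n,p}$ with $G'(u)=\mathfrak m^2 u + V'(u) - J$. Differentiating the three boundary sums carrying the factor $\tfrac{c^2}{2}$ contributes the extra cross terms $c^2(\delta_x u_{0,n,p})(\delta_x\dot u_{0,n,p})$ and their permutations, which exactly extend the coupling sums down to the index $0$. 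I then use \eqref{Eqn:DiscreteMain} in the form $\dot u_{m,n,p}\ddot u_{m,n,p} + G'(u_{m,n,p})\dot u_{m,n,p} = \dot u_{m,n,p}\bigl[c^2\nabla^2 u_{m,n,p} + \beta\nabla^2\dot u_{m,n,p} - \gamma\dot u_{m,n,p}\bigr]$, which splits $\frac{dE}{dt}$ into a conservative family carrying $c^2$, a dissipative family carrying $\beta$, and the term $-\gamma\sum_{m,n,p=1}^N(\dot u_{m,n,p})^2$ that already reproduces the last summand of the statement.

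For each direction I write the discrete Laplacian as a sum of second differences, $\nabla^2=\Delta_x+\Delta_y+\Delta_z$ with $\Delta_x u_{m,n,p}=\delta_x u_{m,n,p}-\delta_x u_{m-1,n,p}$, and work one direction at a time. For the $c^2$ family I apply summation by parts to $\sum_{m=1}^N(\Delta_x u_{m,n,p})\dot u_{m,n,p}$; the Neumann condition $\delta_x u_{N,n,p}=0$ kills the upper boundary term and leaves $-(\delta_x u_{0,n,p})\dot u_{1,n,p}-\sum_{m=1}^{N-1}(\delta_x u_{m,n,p})(\delta_x\dot u_{m,n,p})$. The interior sum here cancels against the coupling cross terms from $\dot H$ and the $\tfrac{c^2}{2}$-boundary derivatives, and the surviving boundary piece collapses, through $\delta_x\dot u_{0,n,p}=\dot u_{1,n,p}-\dot u_{0,n,p}$, to $-(\delta_x u_{0,n,p})\dot u_{0,n,p}$; summing over $n,p$ and over the three directions gives the first bracket of the statement. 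The $\beta$ family is treated identically, now with both sequences equal to $\dot u$, so Lemma \ref{Lemma:Green} applies directly: since $\delta_x\dot u_{N,n,p}=0$ one obtains $-(\delta_x\dot u_{0,n,p})\dot u_{1,n,p}-\sum_{m=1}^{N-1}(\delta_x\dot u_{m,n,p})^2$, which the same identity rewrites as $-\sum_{m=0}^{N-1}(\delta_x\dot u_{m,n,p})^2-(\delta_x\dot u_{0,n,p})\dot u_{0,n,p}$, matching the $\beta$ brackets after the reindexing $m-1\mapsto m$.

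I expect the main obstacle to be the boundary bookkeeping rather than any single estimate: reconciling the differing summation ranges ($\sum_{m=1}^N$ coming from $\dot H$ versus $\sum_{m=0}^N$ after the $\tfrac{c^2}{2}$ terms are differentiated), confirming that every interior coupling-energy term cancels across the three directions, and verifying the two telescoping simplifications that reduce the leftover boundary contributions to the clean forms $-(\delta_x u_{0,n,p})\dot u_{0,n,p}$ and $-(\delta_x\dot u_{0,n,p})\dot u_{0,n,p}$. Keeping the three directions symmetric throughout ensures that the residual terms assemble into the stated sums over $i,j$.
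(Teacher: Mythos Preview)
Your proposal is correct and follows essentially the same route as the paper: differentiate $E$, substitute \eqref{Eqn:DiscreteMain} to eliminate $\ddot u$, and reduce the $c^2$ and $\beta$ spatial sums to boundary contributions via telescoping/summation by parts (Lemma~\ref{Lemma:Green}). The only cosmetic difference is that the paper packages the $c^2$ telescoping through auxiliary ``flux'' variables $K_{m,n,p}=-c^2\dot u_{m,n,p}(u_{m,n,p}-u_{m-1,n,p})$ (and their $y,z$ analogues), rewriting $\tfrac{c^2}{2}\tfrac{d}{dt}(\delta_x u_{m,n,p})^2$ as $(K_{m,n,p}-K_{m+1,n,p})-c^2(\Delta_x u_{m,n,p})\dot u_{m,n,p}$ so that the second-difference piece cancels directly against the equation of motion and the $K$'s telescope; your summation-by-parts argument is the same identity unwound.
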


\begin{proof}
Define $K _{m , n , p} = - c ^2 \dot {u} _{m , n , p} (u _{m , n , p} - u _{m - 1 , n , p})$, $K _{m , n , p} ^\prime = - c ^2 \dot {u} _{m , n , p} (u _{m , n , p} - u _{m , n - 1 , p})$ and $K _{m , n , p} ^{\prime \prime} = - c ^2 \dot {u} _{m , n , p} (u _{m , n , p} - u _{m , n , p - 1})$, for every $m , n , p \in \{1 , 2 , \dots , N\}$. It is necessary to observe first of all that 
\begin{equation*}
\frac {c ^2} {2} \frac {d} {d t} (u _{m + 1 , n , p} - u _{m , n , p}) ^2 = \left( K _{m , n , p} - K _{m + 1 , n , p} \right) - c ^2 (u _{m + 1 , n , p} - 2 u _{m , n , p} + u _{m - 1 , n , p}) \dot {u} _{m , n , p}.
\end{equation*}
Similar relations may be obtained for the derivatives of the other terms in the Hamiltonian which are multiplied by $c ^2$. Moreover, taking derivative of the Hamiltonian with respect to $t$, one obtains that 
\begin{eqnarray*}
\frac {d H _{m , n , p}} {d t} & = & (K _{m , n , p} - K _{m + 1 , n , p}) + (K _{m , n , p} ^\prime - K _{m , n + 1 , p} ^\prime) + (K _{m , n , p} ^{\prime \prime} - K _{m , n , p + 1} ^{\prime \prime}) \\
 & & + \left\{ \ddot {u} _{m , n , p} - c ^2 \nabla ^2 u _{m , n , p} + \mathfrak {m} ^2 u _{m , n , p} + V ^\prime (u _{m , n , p}) - J \right\} \dot {u} _{m , n , p} \\
 & = & (K _{m , n , p} - K _{m + 1 , n , p}) + (K _{m , n , p} ^\prime - K _{m , n + 1 , p} ^\prime) + (K _{m , n , p} ^{\prime \prime} - K _{m , n , p + 1} ^{\prime \prime}) \\
 & & + \beta (\dot {u} _{m + 1 , n , p} - 2 \dot {u} _{m , n , p} + \dot {u} _{m - 1 , n , p}) \dot {u} _{m , n , p} + \beta (\dot {u} _{m , n + 1 , p} - 2 \dot {u} _{m , n , p} + \dot {u} _{m , n - 1 , p}) \dot {u} _{m , n , p} \\
 & & + \beta (\dot {u} _{m , n , p + 1} - 2 \dot {u} _{m , n , p} + \dot {u} _{m , n , p - 1}) \dot {u} _{m , n , p} - \gamma (\dot {u} _{m , n , p}) ^2,
\end{eqnarray*} 
and sum over indexes of $m$, $n$ and $p$ in the set $\{ 1 , 2 , \dots , N \}$. We identify the sums of the first three expressions in parenthesis as telescoping series and proceed to simplify; at the same time, three applications of the discrete version of Green's first identity provide alternative expressions for the terms multiplied by $\beta$. On the other hand, by differentiating the energy expression with respect to time, substituting the derivative of the Hamiltonians and simplifying, we reach the desired formula.
\end{proof}

\section{Numerical analysis\label{Sec3}}

\subsection{Cartesian problem \label{Sec:Thm}}

In order to approximate solutions of the partial differential equation \eqref {Eqn:Main} on the cube $[0 , L] \times [0 , L] \times [0 , L]$ over an interval of time of length $T$, we choose a regular partition $0 = t _0 < t _1 < \dots < t _M = T$ of $[0 , T]$ with time step equal to $\Delta t$, as well as three regular partitions of $[0 , L]$ consisting of $N _x + 1$, $N _y + 1$ and $N _z + 1$ subintervals, each with step equal to $\Delta x$, $\Delta y$ and $\Delta z$, respectively. For all permissible indexes $k$, $m$, $n$ and $p$, we represent the approximate solution to our problem at time $k \Delta t$ and at the location $(m \Delta x , n \Delta y , p \Delta z)$ by $u ^k _{m , n , p}$. The discretization of the problem under study is provided by the finite-difference schemes
\begin{equation}
\begin{array}{c}
\begin{array}{rcl}
\displaystyle {\frac {\delta ^2 _t u _{m , n , p} ^k} {(\Delta t) ^2} - \left( 1 + \beta \frac {\delta _t} {2 \Delta t} \right) \left[ \frac {\delta ^2 _x} {(\Delta x) ^2} + \frac {\delta _y ^2} {(\Delta y) ^2} + \frac {\delta _z ^2} {(\Delta z) ^2} \right] u _{m , n , p} ^k + \frac {\gamma \delta _t} {2 \Delta t} u _{m , n , p} ^k + \quad} & & \\
\displaystyle {\frac {\mathfrak {m} ^2} {2} [u _{m , n , p} ^{k + 1} + u _{m , n , p} ^{k - 1}] + \frac {V (u _{m , n , p} ^{k + 1}) - V (u _{m , n , p} ^{k - 1})} {u _{m , n , p} ^{k + 1} - u _{m , n , p} ^{k - 1}}} - J & = & 0,
\end{array} \label{Eqn:DiffEq2}
\end{array}
\end{equation}
for every $k = 1 , \dots , M - 1$, $m = 1 , \dots , N _x$, $n = 1 , \dots , N _y$ and $p = 1 , \dots N _z$, subject to the conditions 
\begin{equation}
\delta _x u _{N _x , n , p} = \delta _y u _{m , N _y , p} = \delta _z u _{m , n , N _z} = 0. \label{Eqn:BoundCond}
\end{equation}
Here, part of the following notation has been employed for the sake of simplicity:
\begin{equation}
\begin{array}{rclrcl}
\delta _t u _{m , n , p} ^k & = & u _{m , n , p} ^{k + 1} - u _{m , n , p} ^{k - 1}, & \delta ^2 _t u _{m ,n , p} ^k & = & u _{m , n , p} ^{k + 1} - 2 u _{m , n , p} ^k + u _{m , n , p} ^{k - 1}, \\
\delta ^2 _x u _{m , n , p} ^k & = & u _{m + 1 , n , p} ^k - 2 u _{m ,n , p} ^k + u _{m - 1 , n , p} ^k, & \delta ^2 _y u _{m , n , p} ^k & = & u _{m , n + 1 , p} ^k - 2 u _{m ,n , p} ^k + u _{m , n - 1 , p} ^k, \\
\delta ^2 _z u _{m , n , p} ^k & = & u _{m , n , p + 1} ^k - 2 u _{m ,n , p} ^k + u _{m , n , p - 1} ^k.
\end{array}
\end{equation}
The forward-difference stencil of the method is presented in Fig. \ref {Fig0-5} for convenience. Moreover, we introduce the composite operators $\delta _{t x} = \delta _x \delta _t$, $\delta _{t y} = \delta _y \delta _t$, and $\delta _{t z} = \delta _z \delta _t$, and the constant $\Delta \upsilon = \Delta x \Delta y \Delta z$. In these terms, the Hamiltonian of the lattice site at position $(m , n , p)$ and the total energy of the system are, respectively,
\begin{equation}
\begin{array}{rcl}
H _{m , n , p} ^k & = & \displaystyle {\frac {1} {2} \left( \frac {u _{m , n , p} ^{k + 1} - u _{m , n , p} ^k} {\Delta t} \right) ^2 + \frac {1} {2} \left[ \frac {(\delta _x u _{m , n , p} ^{k + 1}) (\delta _x u _{m , n , p} ^k)} {(\Delta x) ^2} + \frac {(\delta _y u _{m , n , p} ^{k + 1}) (\delta _y u _{m , n , p} ^k)} {(\Delta y) ^2} \right.} \\
 & & \displaystyle {\quad \left. + \frac {(\delta _z u _{m , n , p} ^{k + 1}) (\delta _z u _{m , n , p} ^k)} {(\Delta z) ^2} \right] + \frac {\mathfrak {m} ^2} {2} \frac {(u _{m , n , p} ^{k + 1}) ^2 + (u _{m , n , p} ^k) ^2} {2}} \\
 & & \displaystyle {\qquad + \frac {V (u _{m , n , p} ^{k + 1}) + V (u _{m , n , p} ^k)} {2} - J \frac {u _{m , n , p} ^{k + 1} + u _{m , n , p} ^k} {2},} \\
\displaystyle {E ^k} & = & \displaystyle {\sum _{m , n , p = 1} ^N H _{m , n , p} ^k \, \Delta \upsilon + \frac {1} {2} \sum _{i , j = 1} ^N \left[ \frac {(\delta _x u _{0 , i , j} ^{k + 1}) (\delta _x u _{0 , i , j} ^k)} {(\Delta x) ^2} + \frac {(\delta _y u _{i , 0 , j} ^{k + 1}) (\delta _y u _{i , 0 , j} ^k)} {(\Delta y) ^2} \right.} \\
 & & \displaystyle {\quad \left.  + \frac {(\delta _z u _{i , j , 0} ^{k + 1}) (\delta _z u _{i , j , 0} ^k)} {(\Delta z) ^2} \right] \Delta \upsilon.}
\end{array}
\end{equation}

\begin{figure}
\centerline{%
\begin{tabular}{cccc}
 & $t _{k - 1}$ & $t _k$ & $t _{k + 1}$\\ &
\begin{pspicture}(-2.5,-1.5)(2.5,3)
\newgray{gray75}{0.6}\newgray{gray85}{0.7}\newgray{gray95}{0.8}\newgray{gray05}{0.95}
\psset{viewpoint=1 1.7 0.73}
\IIIDKOSystem[gridlabels=0pt,gridcolor=lightgray,subgriddiv=0]{3}%
\ThreeDput[normal=1 0 0]{\psline[linewidth=3pt,linecolor=blue]{->}(0,0)(3,0)}%
\ThreeDput[normal=0 -1 0]{\psline[linewidth=3pt,linecolor=blue]{->}(0,0)(3,0)}%
\ThreeDput[normal=1 0 0]{\psline[linewidth=3pt,linecolor=blue]{->}(0,0)(0,3)}%
\ThreeDput[normal=0 0 1](1,1,3){\psframe*[linecolor=gray95](2,2)}%
\ThreeDput[normal=0 0 1](1,1,2){\psframe*[linecolor=gray95](2,2)}%
\ThreeDput[normal=0 0 1](1,1,1){\psframe*[linecolor=gray95](2,2)}%
\ThreeDput[normal=0 0 1](0,0,3){\psline[linecolor=gray05](2,1)(2,3)\psline[linecolor=gray05](1,2)(3,2)%
	\rput(2,2){$\circ$}}
\ThreeDput[normal=0 0 1](0,0,2){\psline[linecolor=gray05](2,1)(2,3)\psline[linecolor=gray05](1,2)(3,2)%
	\rput(2,1){$\circ$}\rput(2,2){$\circ$}\rput(2,3){$\circ$}\rput(3,2){$\circ$}\rput(1,2){$\circ$}}%
\ThreeDput[normal=0 0 1](0,0,1){\psline[linecolor=gray05](2,1)(2,3)\psline[linecolor=gray05](1,2)(3,2)%
	\rput(2,2){$\circ$}}
\ThreeDput[normal=1 0 0](1,3,0){\uput[0](0,0){\color{blue}$n-1$}}%
\ThreeDput[normal=1 0 0](2,3,0){\uput[0](0,0){\color{blue}$n$}}%
\ThreeDput[normal=1 0 0](3,3,0){\uput[0](0,0){\color{blue}$n+1$}}%
\ThreeDput[normal=0 1 0](3,1,0){\uput[180](0,0){\color{blue}$m-1$}}%
\ThreeDput[normal=0 1 0](3,2,0){\uput[180](0,0){\color{blue}$m$}}%
\ThreeDput[normal=0 1 0](3,3,0){\uput[180](0,0){\color{blue}$m+1$}}%
\ThreeDput[normal=1 0 0](0,3,1){\uput[0](0,0){\color{blue}$p-1$}}%
\ThreeDput[normal=1 0 0](0,3,2){\uput[0](0,0){\color{blue}$p$}}%
\ThreeDput[normal=1 0 0](0,3,3){\uput[0](0,0){\color{blue}$p+1$}}%
\end{pspicture} & 
\begin{pspicture}(-2.5,-1.5)(2.5,3)
\newgray{gray75}{0.6}\newgray{gray85}{0.7}\newgray{gray95}{0.8}\newgray{gray05}{0.95}
\psset{viewpoint=1 1.7 0.73}
\IIIDKOSystem[gridlabels=0pt,gridcolor=lightgray,subgriddiv=0]{3}%
\ThreeDput[normal=1 0 0]{\psline[linewidth=3pt,linecolor=blue]{->}(0,0)(3,0)}%
\ThreeDput[normal=0 -1 0]{\psline[linewidth=3pt,linecolor=blue]{->}(0,0)(3,0)}%
\ThreeDput[normal=1 0 0]{\psline[linewidth=3pt,linecolor=blue]{->}(0,0)(0,3)}%
\ThreeDput[normal=0 0 1](1,1,3){\psframe*[linecolor=gray95](2,2)}%
\ThreeDput[normal=0 0 1](1,1,2){\psframe*[linecolor=gray95](2,2)}%
\ThreeDput[normal=0 0 1](1,1,1){\psframe*[linecolor=gray95](2,2)}%
\ThreeDput[normal=0 0 1](0,0,3){\psline[linecolor=gray05](2,1)(2,3)\psline[linecolor=gray05](1,2)(3,2)%
	\rput(2,2){$\circ$}}
\ThreeDput[normal=0 0 1](0,0,2){\psline[linecolor=gray05](2,1)(2,3)\psline[linecolor=gray05](1,2)(3,2)%
	\rput(2,1){$\circ$}\rput(2,2){$\circ$}\rput(2,3){$\circ$}\rput(3,2){$\circ$}\rput(1,2){$\circ$}}%
\ThreeDput[normal=0 0 1](0,0,1){\psline[linecolor=gray05](2,1)(2,3)\psline[linecolor=gray05](1,2)(3,2)%
	\rput(2,2){$\circ$}}
\ThreeDput[normal=1 0 0](1,3,0){\uput[0](0,0){\color{blue}$n-1$}}%
\ThreeDput[normal=1 0 0](2,3,0){\uput[0](0,0){\color{blue}$n$}}%
\ThreeDput[normal=1 0 0](3,3,0){\uput[0](0,0){\color{blue}$n+1$}}%
\ThreeDput[normal=0 1 0](3,1,0){\uput[180](0,0){\color{blue}$m-1$}}%
\ThreeDput[normal=0 1 0](3,2,0){\uput[180](0,0){\color{blue}$m$}}%
\ThreeDput[normal=0 1 0](3,3,0){\uput[180](0,0){\color{blue}$m+1$}}%
\ThreeDput[normal=1 0 0](0,3,1){\uput[0](0,0){\color{blue}$p-1$}}%
\ThreeDput[normal=1 0 0](0,3,2){\uput[0](0,0){\color{blue}$p$}}%
\ThreeDput[normal=1 0 0](0,3,3){\uput[0](0,0){\color{blue}$p+1$}}%
\end{pspicture} & 
\begin{pspicture}(-2.5,-1.5)(2.5,3)
\newgray{gray75}{0.6}\newgray{gray85}{0.7}\newgray{gray95}{0.8}\newgray{gray05}{0.95}
\psset{viewpoint=1 1.7 0.73}
\IIIDKOSystem[gridlabels=0pt,gridcolor=lightgray,subgriddiv=0]{3}%
\ThreeDput[normal=1 0 0]{\psline[linewidth=3pt,linecolor=blue]{->}(0,0)(3,0)}%
\ThreeDput[normal=0 -1 0]{\psline[linewidth=3pt,linecolor=blue]{->}(0,0)(3,0)}%
\ThreeDput[normal=1 0 0]{\psline[linewidth=3pt,linecolor=blue]{->}(0,0)(0,3)}%
\ThreeDput[normal=0 0 1](1,1,3){\psframe*[linecolor=gray95](2,2)}%
\ThreeDput[normal=0 0 1](1,1,2){\psframe*[linecolor=gray95](2,2)}%
\ThreeDput[normal=0 0 1](1,1,1){\psframe*[linecolor=gray95](2,2)}%
\ThreeDput[normal=0 0 1](0,0,3){\psline[linecolor=gray05](2,1)(2,3)\psline[linecolor=gray05](1,2)(3,2)%
	\rput(2,2){$\times$}}
\ThreeDput[normal=0 0 1](0,0,2){\psline[linecolor=gray05](2,1)(2,3)\psline[linecolor=gray05](1,2)(3,2)%
	\rput(2,1){$\times$}\rput(2,2){$\times$}\rput(2,3){$\times$}\rput(3,2){$\times$}\rput(1,2){$\times$}}%
\ThreeDput[normal=0 0 1](0,0,1){\psline[linecolor=gray05](2,1)(2,3)\psline[linecolor=gray05](1,2)(3,2)%
	\rput(2,2){$\times$}}
\ThreeDput[normal=1 0 0](1,3,0){\uput[0](0,0){\color{blue}$n-1$}}%
\ThreeDput[normal=1 0 0](2,3,0){\uput[0](0,0){\color{blue}$n$}}%
\ThreeDput[normal=1 0 0](3,3,0){\uput[0](0,0){\color{blue}$n+1$}}%
\ThreeDput[normal=0 1 0](3,1,0){\uput[180](0,0){\color{blue}$m-1$}}%
\ThreeDput[normal=0 1 0](3,2,0){\uput[180](0,0){\color{blue}$m$}}%
\ThreeDput[normal=0 1 0](3,3,0){\uput[180](0,0){\color{blue}$m+1$}}%
\ThreeDput[normal=1 0 0](0,3,1){\uput[0](0,0){\color{blue}$p-1$}}%
\ThreeDput[normal=1 0 0](0,3,2){\uput[0](0,0){\color{blue}$p$}}%
\ThreeDput[normal=1 0 0](0,3,3){\uput[0](0,0){\color{blue}$p+1$}}%
\end{pspicture}
\end{tabular}}
\caption{Forward-difference stencil depicting implicit finite-difference scheme \eqref {Eqn:DiffEq2} at site $(m , n , p)$ at time $t _k$. The circles represent known data at the $k$th iteration of the method, while the crosses denote the unknown variables. \label{Fig0-5}}
\end{figure}
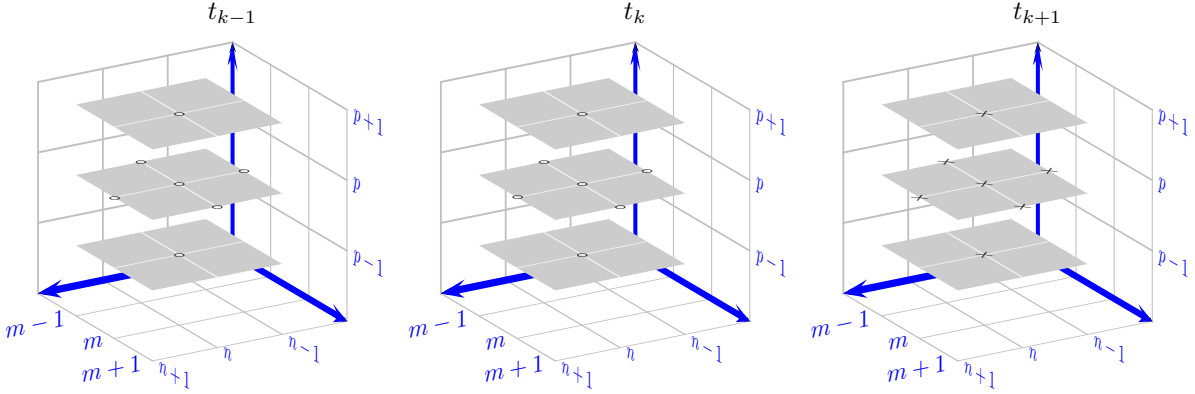

In the following and for the sake of simplification, we will set $N = N _x = N _y = N _z$.

\begin{proposition}
The discrete rate of change of energy with respect to time of system \eqref {Eqn:DiffEq2} subject to the discrete boundary conditions \eqref {Eqn:BoundCond} at the $k$th instant of time is given by \label{Thm:Appendix}
\begin{equation}
\begin{array}{rcl}
\displaystyle {\frac {E ^k - E ^{k - 1}} {\Delta t}} & = & \displaystyle {\left\{ - \sum _{i , j = 1} ^N \left[ \frac {\delta _x u _{0 , i , j} ^k} {(\Delta x) ^2} \frac {\delta _t u _{0 , i , j} ^k} {2 \Delta t} + \frac {\delta _y u _{i , 0 , j} ^k} {(\Delta y) ^2} \frac {\delta _t u _{i , 0 , j} ^k} {2 \Delta t} + \frac {\delta _z u _{i , j , 0} ^k} {(\Delta z) ^2} \frac {\delta _t u _{i , j , 0} ^k} {2 \Delta t} \right] \right.} \\
 & & \quad \displaystyle {- \beta \left\{ \sum _{m , n , p = 1} ^N \left[ \left( \frac {\delta _{t x} u _{m - 1 , n , p} ^k} {2 \Delta x \Delta t} \right) ^2 + \left( \frac {\delta _{t y} u _{m , n - 1 , p} ^k} {2 \Delta y \Delta t} \right) ^2 + \left( \frac {\delta _{t z} u _{m , n , p - 1} ^k} {2 \Delta z \Delta t} \right) ^2 \right] \right.} \\
 & & \qquad \displaystyle {\left. + \sum _{i , j = 1} ^N \left[ \frac {(\delta _{t x} u _{0 , i , j} ^k) (\delta _t u _{0 , i , j} ^k)} {(2 \Delta x \Delta t) ^2} + \frac {(\delta _{t y} u _{i , 0 , j} ^k) (\delta _t u _{i , 0 , j} ^k)} {(2 \Delta y \Delta t) ^2} + \frac {(\delta _{t z} u _{i , j , 0} ^k) (\delta _t u _{i , j , 0} ^k)} {(2 \Delta z \Delta t) ^2}\right] \right\}} \\
 & & \quad\qquad \displaystyle {\left. - \gamma \sum _{m , n , p = 1} ^N \left( \frac {\delta _t u _{m , n , p} ^k} {2 \Delta t} \right) ^2 \right\} \Delta \upsilon .}
\end{array}
\end{equation}
\end{proposition}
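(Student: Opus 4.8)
The plan is to follow closely the argument used in the proof of Proposition \ref{Prop:2-4}, transcribed into the fully discrete space--time setting. First I would form the difference $E^k - E^{k-1}$ by differencing the Hamiltonian $H_{m,n,p}^k$ term by term. The crucial observation is that every quadratic cross term in the discrete Hamiltonian telescopes in time: for any grid sequence $(a^k)$ one has $a^{k+1} a^k - a^k a^{k-1} = a^k (a^{k+1} - a^{k-1}) = a^k \, \delta_t a^k$, so that, for instance, the $x$-coupling term contributes $\frac{1}{2(\Delta x)^2}(\delta_x u_{m,n,p}^k)(\delta_{tx} u_{m,n,p}^k)$ to $H_{m,n,p}^k - H_{m,n,p}^{k-1}$. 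The kinetic, mass, potential, and Josephson pieces factor directly, each producing the common factor $\tfrac{1}{2}\delta_t u_{m,n,p}^k = \tfrac12 (u_{m,n,p}^{k+1}-u_{m,n,p}^{k-1})$; in particular the potential difference $\tfrac12[V(u^{k+1})-V(u^{k-1})]$ reproduces precisely the divided difference $\frac{V(u^{k+1})-V(u^{k-1})}{u^{k+1}-u^{k-1}}$ appearing in \eqref{Eqn:DiffEq2} once it is multiplied and divided by $\delta_t u_{m,n,p}^k$.

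Next I would collect the kinetic, mass, potential, and Josephson contributions, which together equal $\tfrac12 \delta_t u_{m,n,p}^k$ times the combination $\frac{\delta_t^2 u^k}{(\Delta t)^2}+\frac{\mathfrak m^2}{2}(u^{k+1}+u^{k-1})+\frac{V(u^{k+1})-V(u^{k-1})}{u^{k+1}-u^{k-1}}-J$, and substitute the scheme \eqref{Eqn:DiffEq2} to replace this combination by $\left(1+\beta\frac{\delta_t}{2\Delta t}\right)\!\left[\frac{\delta_x^2}{(\Delta x)^2}+\frac{\delta_y^2}{(\Delta y)^2}+\frac{\delta_z^2}{(\Delta z)^2}\right]u^k - \frac{\gamma\delta_t}{2\Delta t}u^k$. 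The $\gamma$ piece immediately yields the friction contribution $-\gamma(\delta_t u_{m,n,p}^k/2\Delta t)^2$ after the final division by $\Delta t$, matching the last line of the claimed identity.

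The heart of the proof is the treatment of the two discrete Laplacian contributions by summation by parts in space, and this is where the boundary-correction terms carried along in $E^k$ play their role. For the undamped Laplacian term $\tfrac12\delta_t u^k\cdot\frac{\delta_x^2 u^k}{(\Delta x)^2}$ (and its $y,z$ analogues), I would combine the coupling differences $\frac{1}{2(\Delta x)^2}(\delta_x u^k)(\delta_{tx}u^k)$ with the boundary corrections carried in $E^k$ so that the latter run over the full range $0\le m\le N$; a summation by parts on this full-range sum, using the Neumann condition $\delta_x u_{N,n,p}^k=0$ from \eqref{Eqn:BoundCond}, reproduces the bulk Laplacian term with opposite sign, thereby cancelling it, and leaves only the flux term $-\frac{(\delta_x u_{0,i,j}^k)(\delta_t u_{0,i,j}^k)}{2(\Delta x)^2}$ on the driving face. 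Summed over the three directions and divided by $\Delta t$, these produce the first bracket of the statement. For the remaining $\beta$-weighted Laplacian term I would apply Green's discrete first identity (Lemma \ref{Lemma:Green}) to the velocity sequence $w_m = \delta_t u_{m,n,p}^k$, whose Neumann condition $\delta_x\delta_t u_{N,n,p}^k=0$ follows by differencing \eqref{Eqn:BoundCond} in time; this converts $\sum_m w_m\,\delta_x^2 w_m$ into the dissipative sum of squares $-\sum_m(\delta_{tx}u_{m-1,n,p}^k)^2$ together with the boundary contribution $-(\delta_t u_{0}^k)(\delta_{tx}u_{0}^k)$, which after normalization give exactly the two $\beta$-sums in the statement.

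The main obstacle is purely bookkeeping: one must keep the factor $\Delta\upsilon$, the denominators $2\Delta t$ and $2\Delta x$, and the ubiquitous factor $\tfrac12$ straight across all three spatial directions, and one must recognize that two structurally different summations by parts are required — a plain discrete integration by parts for the conservative coupling, which yields a pure boundary flux, versus the quadratic identity of Lemma \ref{Lemma:Green} for the $\beta$ terms, which yields sums of squares. Once the cancellation between the bulk Laplacian and the coupling differences is verified and the boundary corrections in $E^k$ are matched against the index-$0$ terms generated by the two summations by parts, dividing the resulting expression for $E^k-E^{k-1}$ by $\Delta t$ gives the asserted formula.
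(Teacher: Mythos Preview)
Your proposal is correct and follows essentially the same route as the paper's proof: difference the discrete Hamiltonian term by term, factor the common $\tfrac12\delta_t u_{m,n,p}^k$, substitute the scheme \eqref{Eqn:DiffEq2}, and then handle the spatial Laplacian contributions by summation by parts together with Lemma \ref{Lemma:Green}. The only minor organizational difference is that the paper first isolates a telescoping flux $j_x u_{m,n,p}^k$ from the coupling term (identity \eqref{Eq:Identity}), obtaining boundary contributions involving $\delta_t u_{1,i,j}^k$, and only at the very end combines these with the time-differenced boundary corrections in $E^k$ to simplify to $\delta_t u_{0,i,j}^k$; your plan to merge the boundary corrections into the sum at the outset and integrate by parts over the full range $0\le m\le N$ achieves the same cancellation in one step and is arguably tidier.
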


\begin{proof}
For the sake of simplification, we adopt the convention $\delta _t ^+ u _{m , n , p} ^k = u _{m , n , p} ^{k + 1} - u _{m , n , p} ^k$ for every $k = 0 , 1 , \dots M - 1$, $m = 0 , 1 , \dots , N _x + 1$, $n = 0 , 1 , \dots , N _y + 1$, $p = 0 , 1 , \dots , N _z + 1$. Notice then that 
\begin{equation*}
\displaystyle {\frac {1} {2} \left( \frac {\delta _t ^+ u _{m , n , p} ^k} {\Delta t} \right) - \frac {1} {2} \left( \frac {\delta _t ^+ u _{m , n , p} ^{k- 1}} {\Delta t} \right)} = \displaystyle {\frac {\delta _t ^2 u _{m , n , p} ^k} {(\Delta t) ^2} \frac {\delta _t u _{m , n , p} ^k} {2}} \end{equation*}
and
\begin{equation*}
\displaystyle {\frac {G (u _{m , n , p} ^{k + 1}) + G (u _{m , n , p} ^k)} {2} - \frac {G (u _{m , n , p} ^k) + G (u _{m , n , p} ^{k - 1})} {2}} = \displaystyle {\frac {G (u _{m , n , p} ^{k + 1}) - G (u _{m , n , p} ^{k - 1})} {u _{m , n , p} ^{k + 1} - u _{m , n , p} ^{k - 1}} \frac {\delta _t u _{m , n , p} ^k} {2}}.
\end{equation*}
Moreover,
\begin{equation}
\begin{array}{c}
\begin{array}{c}
\displaystyle {\frac {(\delta _x u _{m , n , p} ^{k + 1}) (\delta _x u _{m , n , p} ^k)} {2 (\Delta x) ^2} - \frac {(\delta _x u _{m , n , p} ^k) (\delta _x u _{m , n , p} ^{k - 1})} {2 (\Delta x) ^2} = \left( \frac {\delta _x u _{m , n , p} ^k} {\Delta x} \right) \left( \frac {\delta _{t x} u _{m , n , p} ^k} {2 \Delta x} \right)}
\end{array}\\
\begin{array}{rcl}
 \qquad & = & \displaystyle {- \frac {\delta _x ^2 u _{m , n , p} ^k} {(\Delta x) ^2} \frac {\delta _t u _{m , n , p} ^k} {2} + \frac {\delta _x u _{m , n , p} ^k} {(\Delta x) ^2} \frac {\delta _t u _{m + 1 , n , p} ^k} {2} - \frac {\delta _x u _{m - 1 , n , p} ^k} {(\Delta x) ^2} \frac {\delta _t u _{m , n , p} ^k} {2}},
\end{array}
\end{array} \label{Eq:Identity}\tag{*}
\end{equation}
which is an expression that may be further simplified for computational purposes as a consequence of the convention
\begin{equation*}
j _x u _{m , n , p} ^k = \frac {\delta _x u _{m , n , p} ^k} {(\Delta x) ^2} \frac {\delta _t u _{m + 1 , n , p} ^k} {2}.
\end{equation*}
Summing the identities \eqref {Eq:Identity} over all indexes $m$, $n$ and $p$, noticing the presence of a telescoping sum, and applying the discrete Neumann boundary condition, we obtain the following sequence of equalities:
\begin{equation*}
\begin{array}{c}
\begin{array}{c}
\displaystyle {\sum _{m , n , p = 1} ^N \left[\frac {(\delta _x u _{m , n , p} ^{k + 1}) (\delta _x u _{m , n , p} ^k)} {2 (\Delta x) ^2} - \frac {(\delta _x u _{m , n , p} ^k) (\delta _x u _{m , n , p} ^{k - 1})} {2 (\Delta x) ^2} \right] = \qquad \qquad \qquad}
\end{array}\\
\begin{array}{rcl}
 \qquad\qquad & = & \displaystyle {- \sum _{m , n , p = 1} ^N \frac {\delta _x ^2 u _{m , n , p} ^k} {(\Delta x) ^2} \frac {\delta _t u _{m , n , p} ^k} {2} + \sum _{n , p = 1} ^N \left[ \sum _{m = 1} ^N \left( j _x u _{m , n , p} ^k - j _x u _{m - 1 , n , p} ^k \right) \right]} \\
 & = & \displaystyle {- \sum _{m , n , p = 1} ^N \frac {\delta _x ^2 u _{m , n , p} ^k} {(\Delta x) ^2} \frac {\delta _t u _{m , n , p} ^k} {2} - \sum _{n , p = 1} ^N \frac {\delta _x u _{0 , n , p} ^k} {(\Delta x) ^2} \frac {\delta _t u _{1 , n , p} ^k} {2}. }
\end{array}
\end{array} 
\end{equation*}
In similar fashion, one can verify that substituting the difference $\delta _x$ for $\delta _y$ or $\delta _z$ in both sides of this last equation yields a valid equality. It is now straight-forward to verify that
\begin{eqnarray*}
\sum _{m , n , p = 1} ^N \frac {\delta _t ^+ H _{m , n , p} ^{k - 1}} {\Delta t} & = & \sum _{m , n , p = 1} ^N \frac {\delta _t u _{m , n , p} ^k} {2 \Delta t} \left[ \frac {\delta _t ^2 u _{m , n , p} ^k} {(\Delta t) ^2} - \left(\frac {\delta _x ^2 u _{m , n , p} ^k} {(\Delta x) ^2} + \frac {\delta _y ^2 u _{m , n , p} ^k} {(\Delta y) ^2} + \frac {\delta _z ^2 u _{m , n , p} ^k} {(\Delta z) ^2} \right) \right.\\
 & & \quad + \left. \frac {G (u _{m , n , p} ^{k + 1}) - G (u _{m , n , p} ^ {k - 1})} {u _{m , n , p} ^{k + 1} - u _{m , n , p} ^{k - 1}}\right] - \sum _{i , j = 1} ^N \left[ \frac {\delta _x u _{0 , i , j} ^k} {(\Delta x) ^2} \frac {\delta _t u _{1 , i , j} ^k} {2 \Delta t} \right. \\
 & & \qquad \left. + \frac {\delta _y u _{i , 0 , j} ^k} {(\Delta y) ^2} \frac {\delta _t u _{i , 1 , j} ^k} {2 \Delta t} + \frac {\delta _z u _{i , j , 0} ^k} {(\Delta z) ^2} \frac {\delta _t u _{i , j , 1} ^k} {2 \Delta t} \right]\\
 & = & \beta \sum _{m , n , p = 1} ^N \frac {\delta _t u _{m , n , p} ^k} {(2 \Delta t) ^2} \left[ \frac {\delta _t \delta _x ^2 u _{m , n , p} ^k} {(\Delta x) ^2} + \frac {\delta _t \delta _y ^2 u _{m , n , p} ^k} {(\Delta y) ^2} + \frac {\delta _t \delta _z ^2 u _{m , n , p} ^k} {(\Delta z) ^2} \right]\\
 & & \quad - \gamma \sum _{m , n , p} ^N \left( \frac {\delta _t u _{m , n , p} ^k} {2 \Delta t} \right) ^2 - \sum _{i , j = 1} ^N \left[ \frac {\delta _x u _{0 , i , j} ^k} {(\Delta x) ^2} \frac {\delta _t u _{1 , i , j} ^k} {2 \Delta t} \right. \\
 & & \qquad \left. + \frac {\delta _y u _{i , 0 , j} ^k} {(\Delta y) ^2} \frac {\delta _t u _{i , 1 , j} ^k} {2 \Delta t} + \frac {\delta _z u _{i , j , 0} ^k} {(\Delta z) ^2} \frac {\delta _t u _{i , j , 1} ^k} {2 \Delta t} \right].
\end{eqnarray*}
Commutativity of the discrete operators yields $\delta _t u _{m , n , p} ^k \delta _t \delta _x ^2 u _{m , n , p} ^k = \delta _t u _{m , n , p} ^k \left(\delta _t u _{m + 1 , n , p} ^k - 2 \delta _t u _{m , n , p} ^k + u _{m - 1 , n , p} ^k\right)$. 
Summing over all indexes $m = 1 , 2 , \dots , N$, applying the discrete version of Green's first identity, and summing next over all indexes $n , p = 1 , 2 , \dots , N$, it follows that
\begin{equation*}
\sum _{m , n , p = 1} ^N \delta _t u _{m , n , p} ^k \delta _t \delta _x ^2 u _{m , n , p} ^k = - \sum _{n , p = 1} ^N \delta _{t x} u _{0 , n , p} ^k \delta _t u _{0 , n , p} ^k - \sum _{m , n , p = 1} ^N \left( \delta _{t x} u _{m - 1 , n , p} ^k \right) ^2;
\end{equation*}
moreover, similar relations may be obtained for the cases when the operator $\delta _x$ is replaced by $\delta _y$ or $\delta _z$. In this circumstances, we readily obtain that 
\begin{eqnarray*}
\frac {\delta _t ^+ E ^{k - 1}} {\Delta t} & = & - \beta \left\{ \sum _{m , n , p = 1} ^N \left[ \left( \frac {\delta _{t x} u _{m - 1 , n , p} ^k} {2 \Delta x \Delta t} \right) ^2 + \left( \frac {\delta _{t y} u _{m , n - 1 , p} ^k} {2 \Delta y \Delta t} \right) ^2 + \left( \frac {\delta _{t z} u _{m , n , p - 1} ^k} {2 \Delta z \Delta t} \right) ^2 \right] \Delta \upsilon \right. \\
 & & \quad \displaystyle {\left. + \sum _{i , j = 1} ^N \left[ \frac {(\delta _{t x} u _{0 , i , j} ^k) (\delta _t u _{0 , i , j} ^k)} {(2 \Delta x \Delta t) ^2} + \frac {(\delta _{t y} u _{i , 0 , j} ^k) (\delta _t u _{i , 0 , j} ^k)} {(2 \Delta y \Delta t) ^2} + \frac {(\delta _{t z} u _{i , j , 0} ^k) (\delta _t u _{i , j , 0} ^k)} {(2 \Delta z \Delta t) ^2}\right] \right\} \Delta \upsilon} \\
 & & \qquad - \gamma \sum _{m , n , p = 1} ^N \left( \frac {\delta _t u _{m , n , p} ^k} {2 \Delta t} \right) ^2 \Delta \upsilon - \sum _{i , j = 1} ^N \left[ \frac {\delta _x u _{0 , i , j} ^k} {(\Delta x) ^2} \frac {\delta _t u _{1 , i , j} ^k} {2 \Delta t} + \frac {\delta _y u _{i , 0 , j} ^k} {(\Delta y) ^2} \frac {\delta _t u _{i , 1 , j} ^k} {2 \Delta t} \right. \\
 & & \quad\qquad \left.  + \frac {\delta _z u _{i , j , 0} ^k} {(\Delta z) ^2} \frac {\delta _t u _{i , j , 1} ^k} {2 \Delta t} \right] \Delta \upsilon + \sum _{i , j = 1} ^N \left[ \frac {(\delta _{t x} u _{0 , i , j} ^k) (\delta _x u _{0 , i , j} ^k)} {2 (\Delta x) ^2 \Delta t} + \frac {(\delta _{t y} u _{i , 0 , j} ^k) (\delta _y u _{i , 0 , j} ^k)} {2 (\Delta y) ^2 \Delta t} \right. \\
 & & \qquad\qquad \displaystyle {\left. + \frac {(\delta _{t z} u _{i , j , 0} ^k) (\delta _z u _{i , j , 0} ^k)} {2 (\Delta z) ^2 \Delta t} \right] \Delta \upsilon,}
\end{eqnarray*}
whence the result follows after an easy simplification in the last two sums.
\end{proof}

A direct comparison between this result and Propositions \ref {Prop1} and \ref {Prop:2-4} shows that the numerical method presented here consistently approximates the derivative of the total energy of a problem described by either \eqref {Eqn:Main} or \eqref {Eqn:DiscreteMain}. As a consequence, the method proposed in this work is capable of preserving the total energy of a conservative system described by the continuous equation \eqref {Eqn:Main} or the discrete system \eqref {Eqn:DiscreteMain}, which is a physical scenario that appears when both $\beta$ and $\gamma$ are equal to zero and when a fixed boundary is considered. 

For the next result, \emph {stability} means stability order $n$ (see \cite {Thomas}).

\begin{proposition}
Let $V ^\prime$ be identically equal to zero, and let $J = 0$. In order for scheme \eqref {Eqn:DiffEq2} to be stable it is necessary that the condition 
\begin{equation}
4 \left( \frac {1} {(\Delta x) ^2} + \frac {1} {(\Delta y) ^2} + \frac {1} {(\Delta z ) ^2} \right) [(\Delta t) ^2 - \beta \Delta t] - \left[ \gamma + \mathfrak {m} ^2 \Delta t \right] \Delta t < 4
\end{equation} 
be satisfied. \label{Prop:3-2}
\end{proposition}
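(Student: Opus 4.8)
The plan is to linearise and apply a von Neumann (discrete Fourier) analysis. Taking $V^\prime\equiv 0$ forces $V$ to be constant, so the difference quotient $\tfrac{V(u^{k+1})-V(u^{k-1})}{u^{k+1}-u^{k-1}}$ vanishes identically; together with $J=0$ this turns \eqref{Eqn:DiffEq2} into a linear, constant-coefficient difference equation. I would substitute the plane-wave ansatz $u^k_{m,n,p}=\rho^{\,k}\exp[\mathrm{i}(\theta_1 m+\theta_2 n+\theta_3 p)]$ and record the symbol of each operator: after cancelling the common factor $\rho^{\,k-1}\exp[\mathrm{i}(\cdots)]$, one has $\delta_t^2\mapsto(\rho-1)^2$, $\delta_t\mapsto\rho^2-1$, and each centred second difference gives $\delta_x^2\mapsto-4\sin^2(\theta_1/2)$, with the analogous symbols in $y$ and $z$.

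Collecting terms and multiplying by $(\Delta t)^2$, the scheme reduces to the amplification equation $a\rho^2+b\rho+c=0$, where, writing
\[
S=\frac{4\sin^2(\theta_1/2)}{(\Delta x)^2}+\frac{4\sin^2(\theta_2/2)}{(\Delta y)^2}+\frac{4\sin^2(\theta_3/2)}{(\Delta z)^2}\ge 0,
\]
the coefficients are
\[
a=1+\tfrac12(S\beta+\gamma)\Delta t+\tfrac12\mathfrak{m}^2(\Delta t)^2,\qquad b=S(\Delta t)^2-2,\qquad c=1-\tfrac12(S\beta+\gamma)\Delta t+\tfrac12\mathfrak{m}^2(\Delta t)^2.
\]
Since $a>0$ for the admissible step sizes, the two roots satisfy $\rho_1+\rho_2=-b/a$ and $\rho_1\rho_2=c/a$.

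The necessary condition I would extract is the bound on the sum of the roots. Stability of order $n$ forces $|\rho_1|,|\rho_2|\le 1$ for every mode, so the triangle inequality gives $|b|/a=|\rho_1+\rho_2|\le 2$, that is $b\le 2a$ (the companion bound $-2a\le b$ being automatic when $\mathfrak{m}^2\ge 0$ and $\beta,\gamma\ge 0$). This must hold for \emph{every} admissible $S$, and in particular at the highest representable mode $\theta_1=\theta_2=\theta_3=\pi$, where $S$ attains its maximum $4\big(\tfrac{1}{(\Delta x)^2}+\tfrac{1}{(\Delta y)^2}+\tfrac{1}{(\Delta z)^2}\big)$. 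Substituting $a$ and $b$ at this value of $S$ and simplifying reduces $b\le 2a$ to precisely the asserted inequality; since the left-hand side is affine in $S$ with slope $\Delta t(\Delta t-\beta)$, this top mode is indeed the extremal one whenever $\Delta t>\beta$, while for $\Delta t\le\beta$ the bound is vacuous.

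I expect the two points needing genuine care to be the \emph{choice} of necessary condition and the \emph{strictness} of the final inequality. A direct Schur--Cohn/Jury treatment through $p(\pm 1)\ge 0$ and $|c|\le a$ does not isolate the damping contributions (the conditions $p(-1)\ge 0$ and $|c|\le a$ collapse to bounds free of $\beta$ and $\gamma$), so the argument must be routed through the sum of the roots rather than through $p(-1)$. The strict sign in the conclusion then reflects that equality $b=2a$ would force $\rho_1=\rho_2=-1$, a double root on the unit circle producing linear growth and hence violating stability of order $n$; excluding this defective case upgrades $b\le 2a$ to $b<2a$ and yields the strict inequality stated in the proposition.
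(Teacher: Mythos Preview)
Your argument is correct and is essentially the paper's own von Neumann analysis: both reduce the linear scheme to the same quadratic for the amplification factor at the top Fourier mode $(\pi,\pi,\pi)$ and read off the identical necessary bound. The only cosmetic difference is that the paper packages the recursion as a $2\times2$ amplification matrix and inspects its eigenvalues explicitly, while you go straight to the characteristic quadratic and use $|\rho_1+\rho_2|\le 2$; your discussion of why the inequality must be strict (ruling out the defective double root at $-1$) is actually more careful than the paper's.
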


\begin{proof}
For every $m , n , p = 1 , \dots , N$ and every $k = 0 , \dots , M - 1$, let $\bar {u} _{1 , m , n , p} ^{k + 1} = u _{m , n , p} ^{k + 1}$ and $\bar {u} _{2, m , n , p} ^{k + 1} = u _{m , n , p} ^k$, and let $\mathbf {u} _{m , n , p} ^k$ be the two-dimensional column vector whose components are $\bar {u} _{1 , m , n , p} ^k$ and $\bar {u} _{2 , m , n , p} ^k$. In these terms, scheme \eqref {Eqn:DiffEq2} can be presented as
\begin{equation*}
\left(\begin{array}{cc}
g & 0 \\
0 & 1
\end{array}\right) \left( \mathbf {u} _{m , n , p} ^{k + 1} \right) _{m , n , p = 1} ^N =
\left( \begin{array}{cc}
2 + (\Delta t) ^2 \left[\frac {\delta _x ^2} {(\Delta x) ^2} + \frac {\delta _y ^2} {(\Delta y) ^2} + \frac {\delta _z ^2} {(\Delta z) ^2}\right] & - h \\
1 & 0
\end{array} \right) \left( \mathbf {u} _{m , n , p} ^k \right) _{m , n , p = 1} ^N,
\end{equation*}
where
\begin{eqnarray*}
g & = & 1 - \frac {\beta \Delta t} {2} \left[\frac {\delta _x ^2} {(\Delta x) ^2} + \frac {\delta _y ^2} {(\Delta y) ^2} + \frac {\delta _z ^2} {(\Delta z) ^2}\right] + \frac {\gamma \Delta t} {2} + \frac {\mathfrak {m} ^2 (\Delta t) ^2} {2}, \\
h & = & 1 + \frac {\beta \Delta t} {2} \left[\frac {\delta _x ^2} {(\Delta x) ^2} + \frac {\delta _y ^2} {(\Delta y) ^2} + \frac {\delta _z ^2} {(\Delta z) ^2}\right] - \frac {\gamma \Delta t} {2} + \frac {\mathfrak {m} ^2 (\Delta t) ^2} {2}.
\end{eqnarray*}
We apply Fourier transform in order to reach the expression
\begin{equation*}
\hat {\mathbf {u}} _{m , n , p} ^{k + 1} = 
\left( \begin{array}{cc}
\frac {2} {\hat {g} ( \xi , \zeta , \varsigma )} \left(1 - 2 (\Delta t) ^2 \left( \frac {\sin ^2 \frac {\xi} {2}} {(\Delta x) ^2} + \frac {\sin ^2 \frac {\zeta} {2}} {(\Delta y) ^2} + \frac {\sin ^2 \frac {\varsigma} {2}} {(\Delta z) ^2} \right) \right) & - \frac {\hat {h} ( \xi , \zeta , \varsigma )} {\hat {g} ( \xi , \zeta , \varsigma )} \\
1 & 0
\end{array} \right) 
\hat {\mathbf {u}} _{m , n , p} ^k,
\end{equation*}
where the `hat' operator obviously denotes Fourier transform. We identify the $2 \times 2$ matrix multiplying $\hat {\mathbf {u}} _{m , n , p} ^k$ in the above equation as the amplification matrix $A (\xi , \zeta , \varsigma)$ of our problem. Moreover, it is easy to check that the eigenvalues of this matrix when $\xi$, $\zeta$ and $\varsigma$ are all equal to $\pi$, are given by
\begin{equation*}
\lambda _\pm = \frac {1 - 2 R ^2 (\Delta t) ^2 \pm \sqrt{ (1 - 2 R ^2 (\Delta t) ^2) ^2 - \hat {h} (\pi , \pi , \pi) \hat {g} (\pi , \pi , \pi)}} {\hat {g} (\pi , \pi , \pi)},
\end{equation*}
where 
\begin{equation*}
R ^2 = \frac {1} {(\Delta x) ^2} + \frac {1} {(\Delta y) ^2} + \frac {1} {(\Delta z ) ^2}.
\end{equation*}

Suppose for a moment that $1 - 2 R ^2 (\Delta t) ^2 < - \hat {g} (\pi , \pi , \pi)$. If the radical in the expression above yields a pure real number then $| \lambda _- | > 1$. So for every positive integer $l$, $|| A ^l || \geq | \lambda _- | ^l$ grows faster than $K _1 + l K _2$ for any constants $K _1$ and $K _2$. A similar situation prevails when the radical is a pure imaginary number, except that in this case $| \cdot |$ represents the usual Euclidean norm in the field of complex numbers. Therefore in order for our numerical method to be stable it is necessary that $1 - 2 R ^2 (\Delta t) ^2 > - \hat {g} (\pi , \pi , \pi)$, which is what we wished to establish.
\end{proof}

\begin{corollary}
Let $V ^\prime$ be identically equal to zero, let $J = 0$, and suppose that $\Delta x = \Delta y = \Delta z$. In order for scheme \eqref {Eqn:DiffEq2} to be stable it is necessary that the condition 
\begin{equation}
(12 R ^2 - \mathfrak {m} ^2) (\Delta t) ^2 - (\gamma + 12 \beta R ^2) \Delta t < 4
\end{equation}
be satisfied, for $R = 1 / \Delta x$. \qed
\end{corollary}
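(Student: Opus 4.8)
The plan is to obtain the corollary as an immediate specialization of Proposition \ref{Prop:3-2}, since the hypotheses there (namely $V ^\prime \equiv 0$ and $J = 0$) are precisely the standing assumptions of the corollary, with the single extra requirement that the three spatial mesh widths coincide. Thus the entire argument reduces to substituting $\Delta x = \Delta y = \Delta z$ into the necessary stability condition already established and rearranging terms; no new stability analysis is needed.

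First I would recall the inequality furnished by Proposition \ref{Prop:3-2}, namely
\begin{equation*}
4 \left( \frac {1} {(\Delta x) ^2} + \frac {1} {(\Delta y) ^2} + \frac {1} {(\Delta z ) ^2} \right) [(\Delta t) ^2 - \beta \Delta t] - \left[ \gamma + \mathfrak {m} ^2 \Delta t \right] \Delta t < 4.
\end{equation*}
Next, setting $\Delta x = \Delta y = \Delta z$ and writing $R = 1 / \Delta x$ as prescribed, I would simplify the grid factor by observing that the three reciprocal squared mesh widths collapse to $\frac{1}{(\Delta x)^2} + \frac{1}{(\Delta y)^2} + \frac{1}{(\Delta z)^2} = 3 / (\Delta x)^2 = 3 R^2$. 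Substituting this into the displayed inequality turns the leading coefficient $4$ into $4 \cdot 3 R^2 = 12 R^2$.

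I would then distribute and regroup: the terms $12 R^2 (\Delta t)^2$ and $-\mathfrak{m}^2 (\Delta t)^2$ combine into $(12 R^2 - \mathfrak{m}^2)(\Delta t)^2$, while the terms $-12 R^2 \beta \Delta t$ and $-\gamma \Delta t$ combine into $-(\gamma + 12 \beta R^2)\Delta t$, yielding exactly
\begin{equation*}
(12 R ^2 - \mathfrak {m} ^2) (\Delta t) ^2 - (\gamma + 12 \beta R ^2) \Delta t < 4,
\end{equation*}
as claimed. The only point demanding any care — and hence the sole conceptual obstacle, modest as it is — is the deliberate notational shift between the two statements: in Proposition \ref{Prop:3-2} the symbol $R^2$ denoted the \emph{full} sum of reciprocal squared mesh widths, whereas in the corollary $R$ is redefined as $1/\Delta x$ alone, so that the full sum equals $3R^2$ in the corollary's notation. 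Keeping this factor of three straight is what produces the coefficient $12$ (rather than $4$) in the final inequality, and verifying that the $\beta$ and $\gamma$ terms land with the correct signs completes the proof.
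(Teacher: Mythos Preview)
Your proposal is correct and matches the paper's approach: the corollary is stated with a \qed and no proof, signaling that it follows immediately from Proposition~\ref{Prop:3-2} by the very substitution and regrouping you describe. Your observation about the change in the meaning of $R$ between the two statements is apt and is the only point requiring attention.
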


It is important to notice that the order of consistency of the method as defined by the truncation error is $\mathcal {O} ((\Delta x) ^2 + (\Delta y) ^2 + (\Delta z) ^2 + (\Delta t) ^2)$. Also, it is worth mentioning that we have approximated $V ^\prime (u (m \Delta x , n \Delta y , p \Delta z , k \Delta t))$ in the right-hand side of Eq. \eqref {Eqn:Main} through the discrete derivative of $V$ with respect to $u$ presented in \eqref {Eqn:DiffEq2} and not through the direct evaluation $V ^\prime (u _{m , n , p} ^k)$ in view of the fact that such standard scheme is known to be highly unstable \cite {StraussVazquez}. 

\begin{figure}
\centerline{%
\begin{tabular}{cc}
\includegraphics[width=0.47\textwidth]{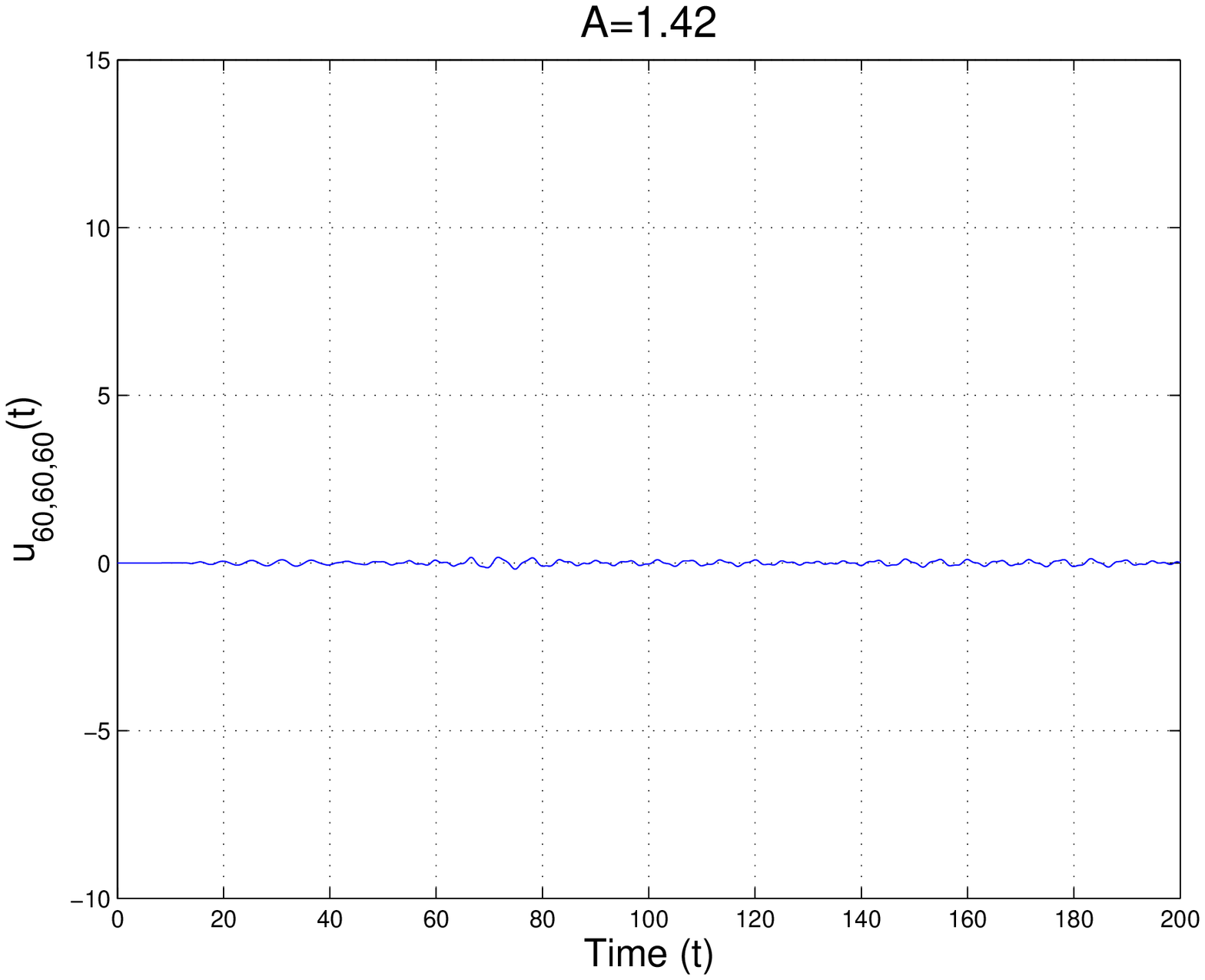}&\includegraphics[width=0.47\textwidth]{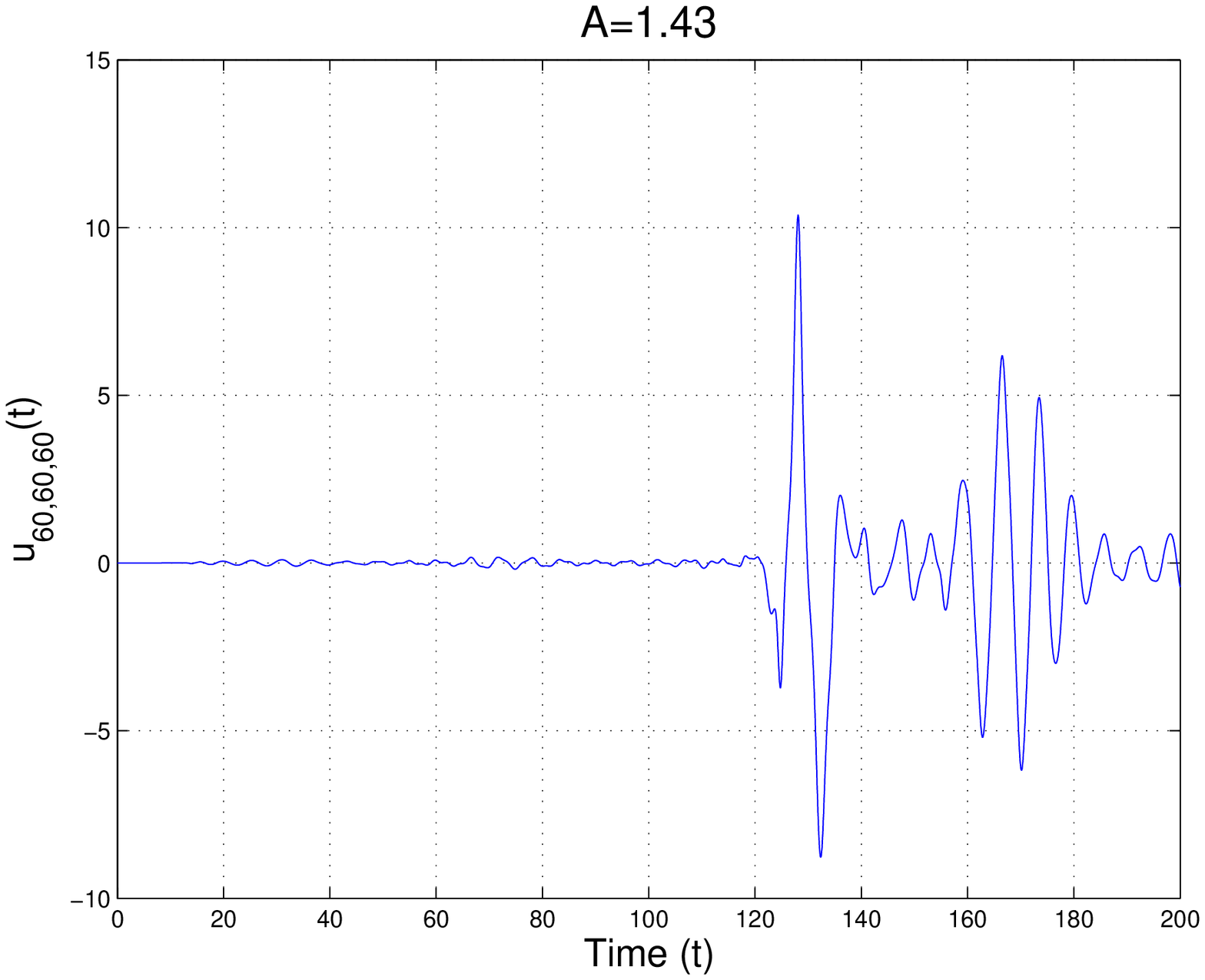}
\end{tabular}}
\caption{Graph of the approximate solution $u _{60 , 60 , 60}$ versus time, obtained by driving system \eqref {Eqn:DiscreteMain} at a frequency equal to $0.9$ in the forbidden band-gap of the continuous-limit medium, and two different amplitudes: $1.42$ (left) and $1.43$ (right). All other parameters are set equal to zero, a time step equal to $0.05$ was fixed, and a system with an absorbing boundary consisting of $200 ^3$ coupled nodes was considered. The graphs are presented as evidence of the presence of supratransmission in the system under study. \label{Fig1}}
\end{figure}

\subsection{Spherical problem}

In order to approximate solutions to \eqref {Eqn:MainRad} in the closure of the solid sphere with center in the origin and radius $L$, we let $\epsilon = r_0 < r _1 < \dots < r _{M + 1} = L$ and $0 = t _0 < t _1 < \dots < t _N = T$ be regular partitions of $[\epsilon , L]$ and $[0 , T]$, respectively, into $M + 1$ and $N$ subintervals of lengths $\Delta r$ and $\Delta t$, respectively. Denote the approximate value of $v (r _j , t _k)$ by $v _j ^k$. The finite-difference scheme used to pursue that task is the implicit method
\begin{eqnarray}
\frac {v _j ^{k + 1} - 2 v _j ^k + v _j ^{k - 1}} {(\Delta t) ^2} - \frac {v _{j + 1} ^k - 2 v _j ^k + v _{j - 1} ^k} {(\Delta r) ^2} + \gamma \frac {v _j ^{k + 1} - v _j ^{k - 1}} {2 \Delta t} - \qquad & & \nonumber \\
\beta \frac {\left( v _{j + 1} ^{k + 1} - 2 v _j ^{k + 1} + v _{j - 1} ^{k + 1}\right) - \left( v _{j + 1} ^{k - 1} - 2 v _j ^{k - 1} + v _{j - 1} ^{k - 1} \right)} {2 \Delta t \left( \Delta r \right) ^2} + \frac {\mathfrak {m} ^2} {2} \left[ v _j ^{k + 1} + v _j ^{k - 1} \right] + \quad & & \label{EasyScheme1} \\
 (\epsilon + j \Delta r) ^2 \frac {V ( \frac {v _ j ^{k + 1}} {\epsilon + j \Delta r} ) - V ( \frac {v _j ^{k - 1}} {\epsilon + j \Delta r} )} {v _j ^{k + 1} - v _j ^{k - 1}} - J \left( \epsilon + j \Delta r \right) & = & 0, \nonumber
\end{eqnarray}
defined for every $j = 1 , \dots , M$ and every $k = 1 , \dots , N - 1$. Meanwhile, the total energy of the system \eqref {Eqn:RadEnergy} and the Neumann boundary condition of the problem \eqref {Eqn:BoundRad} at the $k$-th time step will be respectively approximated using the schemes
\begin{equation}
\begin{array}{rcl}
\displaystyle {E ^k} & = & \displaystyle {\frac {1} {2} \sum _{j = 0} ^{M - 1} \left( \frac {v _j ^{k + 1} - v _j ^k } {\Delta t} \right) ^2 \Delta r + \frac {1} {2} \sum _{j = 0} ^{M - 1} \left( \frac {v _{j + 1} ^{k + 1} - v _j ^{k + 1}} {\Delta r}\right) \left( \frac {v _{j + 1} ^k - v _j ^k} {\Delta r} \right) \Delta r} \\ %
 & & \displaystyle {\quad + \frac {\mathfrak {m} ^2} {2} \sum _{j = 0} ^{M - 1} \frac {(v _j ^{k + 1}) ^2 +  (v _j ^k) ^2} {2} \Delta r + \sum _{j = 1} ^{M - 1} (\epsilon + j \Delta r) ^2 \frac {V \left( \frac {v _j ^{k + 1}} {\epsilon + j \Delta r} \right) + V \left( \frac {v _j ^k} {\epsilon + j \Delta r} \right)} {2} \Delta r} \\
 & & \displaystyle {\qquad - J \sum _{j = 0} ^{M - 1} (\epsilon + j \Delta r) \frac {v _j ^{k + 1} + v _j ^k} {2} \Delta r,}
\end{array}
\end{equation}
and
\begin{equation}
\frac {v _{M + 1} ^k - v _M ^k} {\Delta r} + \frac {v _{M + 1} ^k + v _M ^k} {2 (\epsilon + M \Delta r) ^2} = 0. \label {Eqn:BoundCondRad}
\end{equation}

The following results summarize the most important numerical properties of the method proposed in the present subsection. The proofs are omitted in view that they are similar to the corresponding results of the Cartesian case.

\begin{proposition}[Mac\'{\i}as-D\'{\i}az et al \cite {Macias-Puri}]
The discrete rate of change of energy with respect to time of system \eqref {EasyScheme1} subject to the discrete boundary condition \eqref {Eqn:BoundCondRad} at the $k$th instant of time is given by 
\begin{equation}
\begin{array}{rcl}
\displaystyle {\frac {E ^k - E ^{k - 1}} {\Delta t}} & = & \displaystyle {- \frac {\pi} {2} \left\{ \beta \sum _{j = 1} ^{M - 1} \left( \frac { v _j ^{k + 1} - v _j ^{k - 1} } {2 \Delta t} \right) \left( \frac { ( v _j ^{k + 1} - v _j ^{k - 1} ) - ( v _{j - 1} ^{k + 1} - v _{j - 1} ^{k - 1} ) } { \Delta t ( \Delta r ) ^2} \right) \Delta r \right.} \\
 & & \displaystyle {\qquad \left. + \gamma \sum _{j = 1} ^{M - 1} \left( \frac {v _j ^{k + 1} - v _j ^{k - 1}} {2 \Delta t} \right) ^2 \Delta r \right\},}
\end{array}
\end{equation} 
for $\epsilon$ equal to zero. \qed
\end{proposition}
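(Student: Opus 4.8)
The plan is to reproduce the reasoning of Proposition~\ref{Thm:Appendix} almost verbatim, since the radial scheme \eqref{EasyScheme1} differs from its Cartesian analogue \eqref{Eqn:DiffEq2} only in that a single spatial index $j$ replaces the triple $(m,n,p)$ and in the appearance of the radial weights $(\epsilon + j\Delta r)^2$ and the modified boundary relation \eqref{Eqn:BoundCondRad}. First I would adopt the forward-difference convention $\delta_t^+ v_j^k = v_j^{k+1} - v_j^k$, together with $\delta_t v_j^k = v_j^{k+1} - v_j^{k-1}$, $\delta_r v_j^k = v_{j+1}^k - v_j^k$, and $\delta_{tr} = \delta_t \delta_r$, and then rewrite the left-hand side as $\frac{E^k - E^{k-1}}{\Delta t} = \frac{\delta_t^+ E^{k-1}}{\Delta t}$, so that the task reduces to computing the one-step time difference of each summand defining the discrete energy $E^k$.

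I would then establish the elementary telescoping identities for each contribution. For the kinetic part one has $\frac{1}{2}\bigl(\frac{\delta_t^+ v_j^k}{\Delta t}\bigr)^2 - \frac{1}{2}\bigl(\frac{\delta_t^+ v_j^{k-1}}{\Delta t}\bigr)^2 = \frac{\delta_t^2 v_j^k}{(\Delta t)^2}\frac{\delta_t v_j^k}{2}$; the mass and $J$ contributions collapse to $\frac{\mathfrak m^2}{2}(v_j^{k+1}+v_j^{k-1})\frac{\delta_t v_j^k}{2}$ and $-J(\epsilon+j\Delta r)\frac{\delta_t v_j^k}{2}$; and the potential term produces the discrete chain-rule factor $(\epsilon+j\Delta r)^2 \frac{V(v_j^{k+1}/r_j)-V(v_j^{k-1}/r_j)}{v_j^{k+1}-v_j^{k-1}}\frac{\delta_t v_j^k}{2}$, where $r_j = \epsilon + j\Delta r$. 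Each of these is exactly the corresponding term of the scheme \eqref{EasyScheme1} multiplied by $\frac{\delta_t v_j^k}{2}$. For the gradient part I would invoke the radial analogue of identity \eqref{Eq:Identity}, writing $\frac{(\delta_r v_j^{k+1})(\delta_r v_j^k) - (\delta_r v_j^k)(\delta_r v_j^{k-1})}{2(\Delta r)^2}$ as a telescoping flux plus the residual $-\frac{\delta_r^2 v_j^k}{(\Delta r)^2}\frac{\delta_t v_j^k}{2}$; summing over $j$, the flux telescopes and its surviving endpoint is removed by the nonstandard Neumann condition \eqref{Eqn:BoundCondRad}.

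Assembling the pieces, the quantity multiplying $\frac{\delta_t v_j^k}{2}$ in the summand is precisely the conservative part of the residual of \eqref{EasyScheme1}; invoking the scheme itself replaces that bracket by the damping terms, so that the $\gamma$ contribution appears at once as $-\gamma\sum_j\bigl(\frac{\delta_t v_j^k}{2\Delta t}\bigr)^2\Delta r$, while the $\beta$ contribution takes the form $\beta\sum_j \frac{\delta_t v_j^k}{2\Delta t}\frac{\delta_t\delta_r^2 v_j^k}{2\Delta t(\Delta r)^2}\Delta r$. I would reduce the latter by summation by parts, using the discrete Green identity of Lemma~\ref{Lemma:Green} applied to the sequence $a_j = \delta_t v_j^k$, which lowers the order of the difference operator, trades the second difference $\delta_r^2$ for the first-difference product recorded in the statement, and generates boundary contributions at $j=0$ and at the outer endpoint.

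The step I expect to be the genuine obstacle is the bookkeeping of these boundary and variable-coefficient terms rather than the interior algebra. The hypothesis $\epsilon = 0$ is precisely what is needed: it forces $r_0 = 0$ and, via the Dirichlet condition $v(0,t)=0$, makes $v_0^k = 0$, thereby annihilating both the flux at the origin and the $a_0(a_0 - a_1)$ term generated by Green's identity. At the outer radius, by contrast, one must show that the stray flux at $j = M$ is cancelled through the modified Neumann relation \eqref{Eqn:BoundCondRad}, whose extra $\frac{v_{M+1}^k + v_M^k}{2(\epsilon + M\Delta r)^2}$ summand has no counterpart in the Cartesian setting and encodes the $1/r$ correction intrinsic to the spherical reduction. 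Checking that all of these geometric corrections combine to leave only the clean $\beta$ and $\gamma$ sums, carrying along the overall factor $\frac{\pi}{2}$ inherited from the radial energy normalization \eqref{Eqn:RadEnergy}, is the delicate part; everything else is the routine telescoping already carried out in Proposition~\ref{Thm:Appendix}.
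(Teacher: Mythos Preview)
Your proposal is correct and follows exactly the approach the paper indicates: the paper omits the proof entirely, stating only that it is similar to the Cartesian case (Proposition~\ref{Thm:Appendix}), and your plan is precisely to transport that argument to the radial setting, handling the single spatial index, the radial weights, and the modified boundary relation \eqref{Eqn:BoundCondRad}. Your identification of the boundary bookkeeping---the vanishing at the origin forced by $\epsilon=0$ and the outer-endpoint cancellation via \eqref{Eqn:BoundCondRad}---as the only genuinely new work is accurate, and the rest is indeed the routine telescoping already carried out in the Cartesian proof.
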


\begin{proposition}[Mac\'{\i}as-D\'{\i}az et al \cite {Macias-Puri}]
Let $V ^\prime$ be identically equal to zero, and let $J = 0$. In order for finite-difference scheme \eqref {Eqn:MainRad} to be stable
order $n$ it is necessary that
\begin{equation}
\left( \frac {\Delta t} {\Delta r} \right) ^2 < 1 + \gamma \frac {\Delta t} {4} + \beta \frac {\Delta t} { \left( \Delta r \right) ^2 } + \mathfrak {m} ^2 \frac {(\Delta t) ^2} {4}.
\end{equation} \qed
\end{proposition}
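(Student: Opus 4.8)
The plan is to obtain this bound exactly as the one-dimensional counterpart of Proposition \ref{Prop:3-2}, and the first thing to notice is that the hypotheses make the reduction almost free. With $V^\prime \equiv 0$ (so $V$ is constant) and $J = 0$, both the difference quotient $(\epsilon + j \Delta r)^2 [ V(v_j^{k+1}/(\epsilon+j\Delta r)) - V(v_j^{k-1}/(\epsilon+j\Delta r)) ] / (v_j^{k+1} - v_j^{k-1})$ and the current term $J(\epsilon + j \Delta r)$ vanish identically. Every term in which the index $j$ appears explicitly is thereby eliminated, and \eqref{EasyScheme1} collapses to a linear, genuinely \emph{constant-coefficient}, three-level recursion in the spatial index, namely the one-dimensional analogue of \eqref{Eqn:DiffEq2}. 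Consequently the von Neumann (Fourier) analysis applies without any frozen-coefficient approximation, which is the point that makes the spherical estimate rigorous rather than heuristic.

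First I would recast the three-level recursion as a one-step recursion by introducing, for each $j$, the column vector $\mathbf{v}_j^k = (v_j^k, v_j^{k-1})^\top$, in direct analogy with the pair $\bar{u}_1, \bar{u}_2$ used in the proof of Proposition \ref{Prop:3-2}. Applying the discrete Fourier transform in the spatial index replaces the second difference $\delta_r^2$ by multiplication by $-4\sin^2(\xi/2)$, yielding $\hat{\mathbf{v}}_j^{k+1} = A(\xi)\,\hat{\mathbf{v}}_j^k$ with amplification matrix
\begin{equation*}
A(\xi) = \left( \begin{array}{cc} \dfrac{2 - 4(\Delta t/\Delta r)^2 \sin^2\frac{\xi}{2}}{\hat{g}(\xi)} & -\dfrac{\hat{h}(\xi)}{\hat{g}(\xi)} \\ 1 & 0 \end{array} \right),
\end{equation*}
where
\begin{equation*}
\hat{g}(\xi) = 1 + \frac{\gamma\Delta t}{2} + \frac{2\beta\Delta t}{(\Delta r)^2}\sin^2\tfrac{\xi}{2} + \frac{\mathfrak{m}^2(\Delta t)^2}{2}, \qquad \hat{h}(\xi) = 1 - \frac{\gamma\Delta t}{2} - \frac{2\beta\Delta t}{(\Delta r)^2}\sin^2\tfrac{\xi}{2} + \frac{\mathfrak{m}^2(\Delta t)^2}{2}.
\end{equation*}

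Next I would locate the extremal mode, which as in the Cartesian case is $\xi = \pi$, where $\sin^2(\xi/2) = 1$. Writing $R = 1/\Delta r$, the eigenvalues of $A(\pi)$ take the same shape as before,
\begin{equation*}
\lambda_\pm = \frac{1 - 2R^2(\Delta t)^2 \pm \sqrt{(1 - 2R^2(\Delta t)^2)^2 - \hat{h}(\pi)\,\hat{g}(\pi)}}{\hat{g}(\pi)},
\end{equation*}
and I would repeat verbatim the dichotomy of Proposition \ref{Prop:3-2}: assuming $1 - 2R^2(\Delta t)^2 < -\hat{g}(\pi)$, one checks that whether the radicand is real or imaginary one of the $\lambda_\pm$ has modulus exceeding unity, so $\|A^l\| \geq |\lambda_-|^l$ outgrows every polynomial in $l$ and stability of order $n$ is impossible. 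Hence a necessary condition is $1 - 2R^2(\Delta t)^2 > -\hat{g}(\pi)$. Substituting $\hat{g}(\pi) = 1 + \gamma\Delta t/2 + 2\beta R^2 \Delta t + \mathfrak{m}^2(\Delta t)^2/2$ and rearranging gives $(\Delta t/\Delta r)^2 < 1 + \gamma\Delta t/4 + \beta\Delta t/(\Delta r)^2 + \mathfrak{m}^2(\Delta t)^2/4$, which is exactly the asserted bound; equivalently, it is the Proposition \ref{Prop:3-2} inequality restricted to a single spatial dimension and divided by $4$.

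The only substantive analytic step is the eigenvalue-modulus dichotomy, and it is inherited unchanged from the Cartesian proof, so I anticipate no real obstacle there. The genuinely delicate point is instead the legitimacy of the Fourier reduction on a variable-coefficient radial scheme; here the hypotheses resolve it for free, since $V^\prime \equiv 0$ and $J = 0$ remove every $j$-dependent coefficient and leave a constant-coefficient operator to which the transform applies exactly. Everything beyond that is careful bookkeeping of the constants $\gamma$, $\beta$ and $\mathfrak{m}^2$, which is why the authors legitimately omit the details as parallel to the Cartesian case.
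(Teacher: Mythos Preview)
Your proposal is correct and follows precisely the route the paper intends: the authors explicitly omit the proof, stating that it is ``similar to the corresponding results of the Cartesian case,'' and your argument is exactly the one-dimensional specialization of the proof of Proposition~\ref{Prop:3-2}. Your additional observation---that the hypotheses $V'\equiv 0$ and $J=0$ eliminate every $j$-dependent coefficient so that the Fourier analysis is exact rather than a frozen-coefficient approximation---is the only point requiring comment beyond the Cartesian argument, and you handle it correctly.
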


We must remark that the computational technique presented in this section makes use of Newton's method to approximate solutions of systems of nonlinear equations. In each iteration, the system of equations derived from Newton's method is linear, and may be solved using Crout's technique for tridiagonal systems.

\section{Applications\label{Sec4}}

\begin{figure}
\centerline{\includegraphics[width=0.6\textwidth]{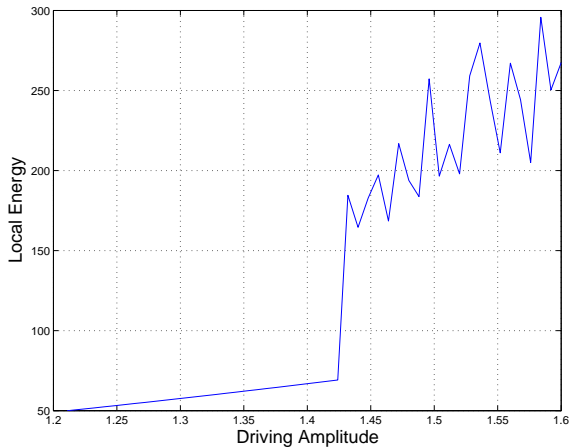}}
\caption{Bifurcation diagram of total energy of node located at sine $(60 , 60 , 60)$ over a time period of $200$ versus driving amplitude, for a diving frequency equal to $0.9$ in the forbidden band-gap region of the continuous-limit medium. All other parameters are equal to zero, a time step equal to $0.05$ was employed, and a system with an absorbing boundary consisting of $200 ^3$ coupled nodes was considered. The graph is presented as evidence of the presence of supratransmission in the system under study. \label{Fig2}}
\end{figure}

Throughout this section, we suppose that the function driving the boundary assumes the expression $\phi (t) = A \sin (\Omega t)$, where the driving frequency takes on values in the forbidden band-gap region of the continuous-limit medium $\Omega < \sqrt {\mathfrak {m} ^2 + 1}$. The explicit scheme resulting from setting $\beta$ equal to zero in \eqref {Eqn:DiffEq2} is employed to verify the validity of our results in the case of weak internal damping.

\subsection{Nonlinear supratransmission}
\label{SubSec:Supra}

The process of supratransmission in nonlinear systems submitted to harmonic driving is completely characterized by a sudden increase in the energy injected into the system by the driving source \cite {Geniet-Leon}. Thus, the method employed to determine the critical value at which supratransmission starts, given a fixed frequency $\Omega$ in the forbidden band-gap of the system, consists in computing the associated energy $E$ for various driving amplitudes $A$ in an interval containing the supratransmission threshold, over a fixed, relatively long period of time; in these circumstances, the graph of $E$ versus $A$ will evidence a point of discontinuity where a drastic increase in the total energy of the system takes place. 

In the case of an infinite number of coupled junctions $(u _{m , n , p}) _{m , n , p = 1} ^\infty$ satisfying problem (\ref {Eqn:DiscreteMain}), we consider a large finite subsystem consisting of $N \times N \times N$ coupled junctions, with damping coefficient $\gamma _{m , n , p}$ including the effect of an absorbing boundary in the farthest junctions from the point of intersection of the three driving boundaries. That is, we let
\begin{equation}
\begin{array}{rcl}
\gamma _{m , n , p} & = & \displaystyle {\gamma + \frac {1} {6} \left[ 3 + \tanh \left( \displaystyle {\frac {2 m - N _0 + N} {6}} \right) + \tanh \left( \displaystyle {\frac {2 n - N _0 + N} {6}} \right) \right.} \\
 & & \quad \displaystyle {\left. + \tanh \left( \frac {2 p - N _0 + N} {6} \right) \right]},
\end{array} \label{Eq:Damp}
\end{equation}
where $1 \ll N _0 < N$. 

Let all constant parameters in the differential equations of \eqref {Eqn:DiscreteMain} be set equal to zero. Following the method described in the previous paragraph, we submit system \eqref {Eqn:DiscreteMain} to harmonic driving with frequency in the forbidden band-gap of the continuous-limit medium over a time period of $[ 0 , 200 ]$, and compute the associated total energy of the system for several driving amplitudes. In order to avoid the generation of shock waves at the origin around the time $t = 0$, we opt for slowly and linearly increase the driving amplitude from $0$ to its actual value $A$. Numerically, we choose a time step equal to $0.05$ (so that the stability condition provided by Proposition \ref {Prop:3-2} is clearly satisfied), fix a cubic system of dimension $200 ^3$, and we damp the farthest nodes from the origin using \eqref {Eq:Damp} and $N _0 = 50$. 

To start with, fix a driving frequency of $0.9$ and chose two different amplitude values: $A = 1.42$ and $A = 1.43$. The time behavior of the solution of the node located at site $(60 , 60 , 60)$ as a result of driving system \eqref {Eqn:DiscreteMain} under the circumstances described in the paragraph above for the two amplitudes considered is displayed in Fig. \ref {Fig1}. It is worth noticing that the wave signals transmitted into the system for a driving amplitude equal to $1.42$ posses a very low amplitude when compared against the driving amplitude itself (left graph). On the other hand, a driving amplitude of $1.43$ produces wave signals of higher amplitude (right graph). Moreover, the total energy at site $(60 , 60 , 60)$ is computed by integrating the discrete Hamiltonian, obtaining, in the first case, an energy equal to $68.7613$, while a total energy of $161.3648$ is obtained in the second, whence the existence of a critical amplitude between $1.42$ and $1.43$ at which supratransmission starts is suspected.


Next, we compute the total energy at site $(60 , 60 , 60)$ of system \eqref {Eqn:DiscreteMain} for several amplitude values around the suspected critical values, and for a fixed frequency equal to $0.9$. As before, all other parameters are set equal to zero, and we use the same numerical setting as before. In these circumstances, we present the graph of total energy versus amplitude on the time period $[ 0 , 200 ]$. The results are presented in Fig. \ref {Fig2} and confirm that a drastic increase in the total energy of the node appears for a driving amplitude between $1.42$ and $1.43$, proving thus the presence of nonlinear supratrasmission in our system, at least for a driving frequency of $0.9$. In this point it must be mentioned that we have established that supratransmission is likewise present for other choices we made of the parameter $\Omega$. 

\subsection{A counter-example}
\label{SubSec:Counter}

Numerically, let us fix a time step $\Delta t = 0.02$, and let the radial step $\Delta r$ and the parameter $\epsilon$ both equal $\Delta t$. In all our computations we consider a fixed time period. Moreover, in order to avoid the generation of shock waves, the driving amplitude will increase linearly and slowly from $0$ to its actual value $A$ during a relatively short period of time, before the initial instant $t = 0$ takes place.

Assume that the medium has no damping. In order to simulate an unbounded medium, we approximate solutions to problem (\ref {Eqn:MainRad}) in a closed sphere $S$ with center in the origin and radius $L = 6$, in which the parameter $\gamma$ slowly increases in magnitude from $0$ to $1$ outside the open sphere with center in the origin and radius $5$, simulating thus an absorbing boundary. More precisely, we let
\begin{equation}
\gamma (r) = \left\{ \begin{array}{ll}
\displaystyle {\frac {1} {2} \left\{ 1 + \tanh [8 (r - 5.5)] \right\}}, & 5 \leq r \leq 6, \\
0, & 0 < r < 5.
\end{array}\right.
\end{equation}

It is worth noticing that if the potential function $V$ is that for a sine-Gordon system, then $r V ^\prime (v / r)$ is approximately equal to zero for nonzero values of $r$ sufficiently close to zero. It is therefore expected that the medium behaves in a linear fashion around the origin and, particularly, that the medium does not support the process of supratransmission under the presence of harmonic perturbations at the origin. Of course, this claim will be confirmed numerically next for both Klein-Gordon and sine-Gordon systems.

\begin{figure}
\centerline{%
\begin{tabular}{cc}
{\bf (a)} & {\bf (b)} \\
\includegraphics[width=0.5\textwidth]{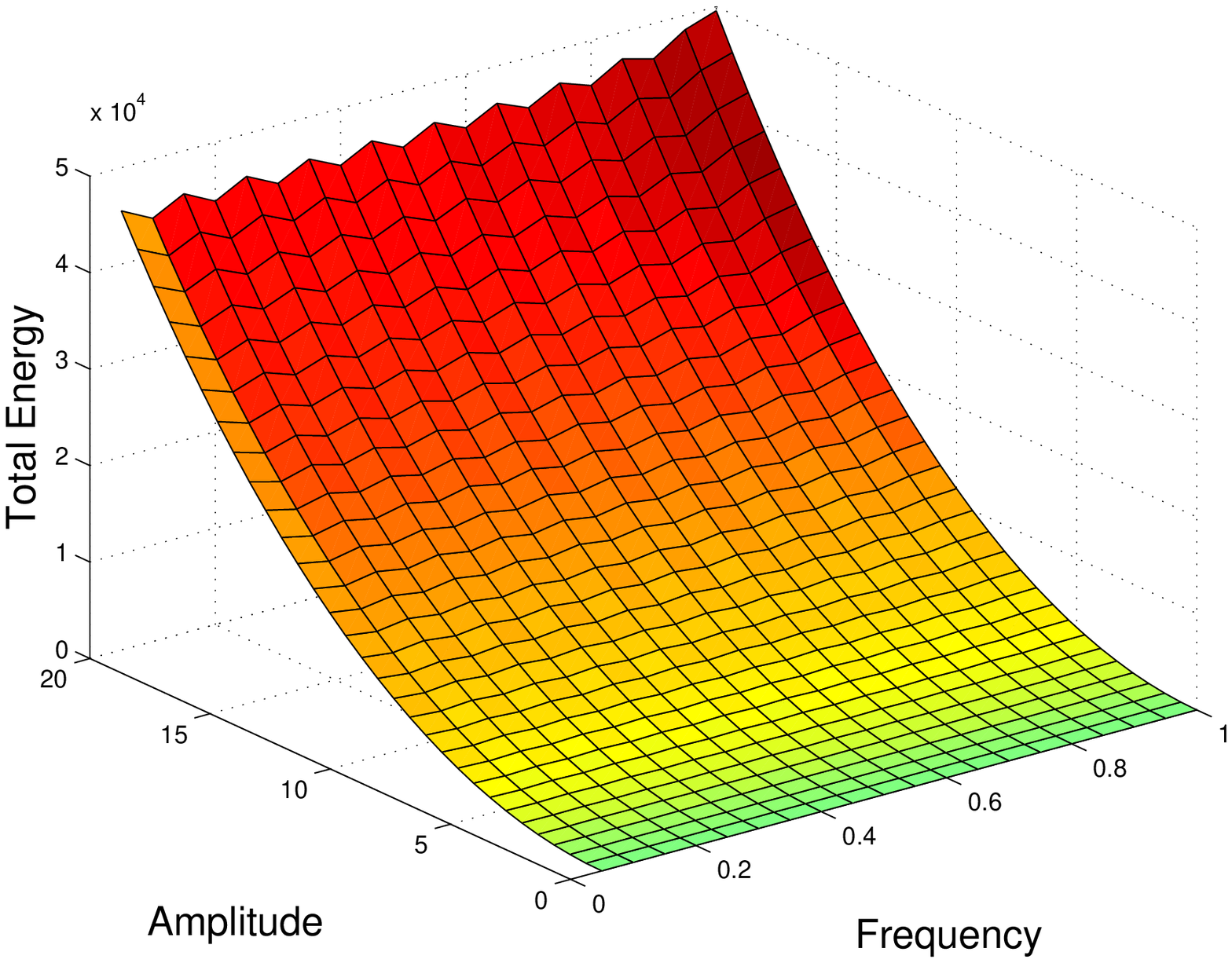}&\includegraphics[width=0.5\textwidth]{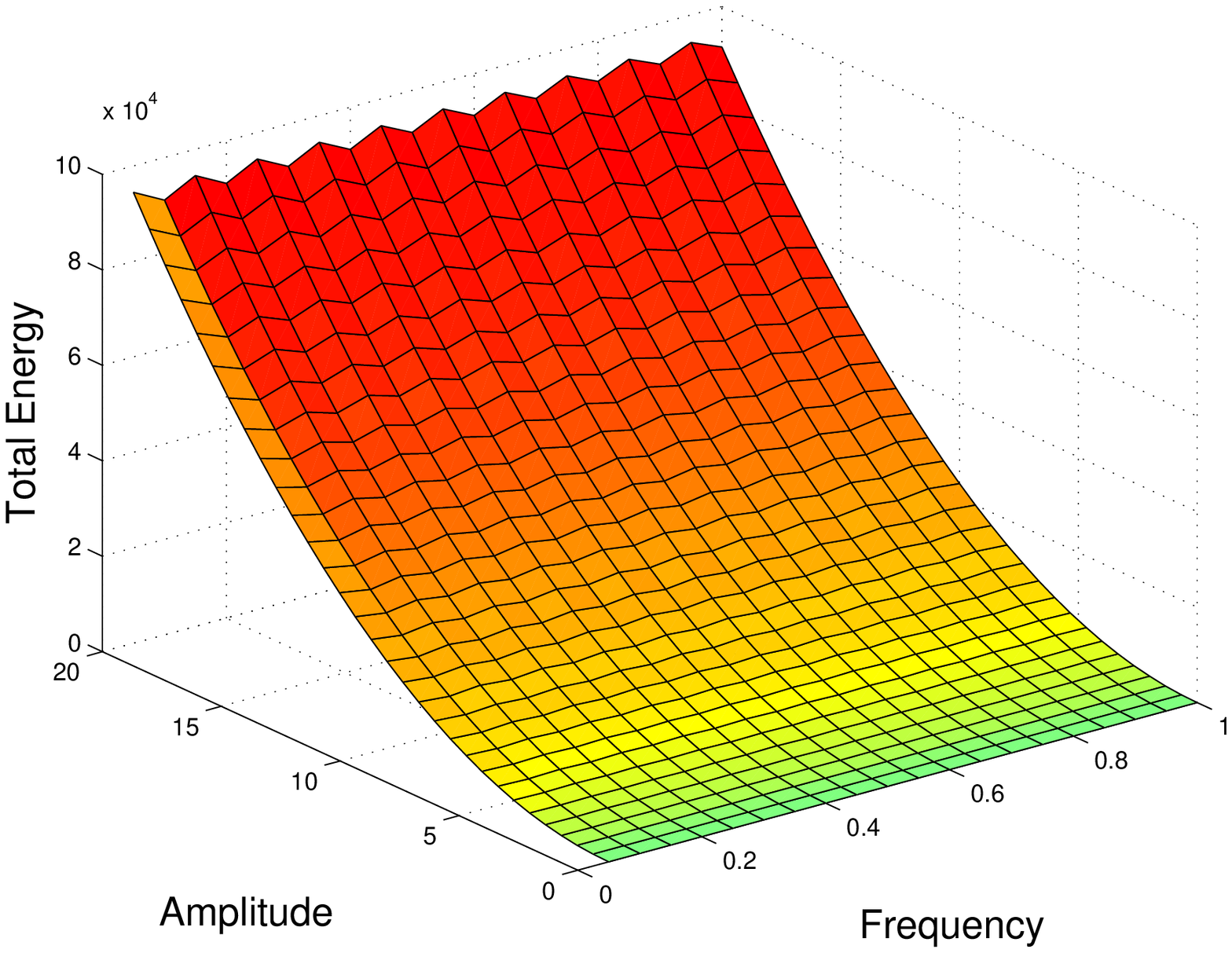}
\end{tabular}}
\caption{Total energy versus driving frequency $\Omega$ and driving amplitude $A$ in undamped system \eqref {Eqn:MainRad} with $J = 0$ and potential equal to that of a nonlinear Klein-Gordon equation (left), and equal to that of a classical sine-Gordon medium (right). The harmonic driving function at the origin is defined by $\phi (t) = A \sin (\Omega t)$, and a time period of $200$ was fixed. \label{Fig12}}
\end{figure}

Throughout, we let $A$ range in $[0 , 20]$. Let us take a fixed frequency $\Omega = 0.9$ in the forbidden band-gap of a continuous Klein-Gordon medium described by model \eqref {Eqn:MainRad}. The associated total energy of the system during the fixed period of time is computed, obtaining 
evidence of a continuous increase in the amount of energy injected in the system by the driving boundary, with no apparent discontinuities in the total energy of the system. Next, we let $\Omega$ take on values in the interval $[0 , 1]$. The graph of total energy versus driving amplitude and driving frequency is presented as Fig. \ref {Fig12}(a), and the results evidence that the total energy increases smoothly as the driving amplitude is increased. This fact supports our claim that the process of nonlinear supratransmission is not present in this medium.

\begin{figure}
\centerline{%
\begin{tabular}{cc}
$t = 5$ & $t = 7.5$\\
\includegraphics[width=0.35\textwidth]{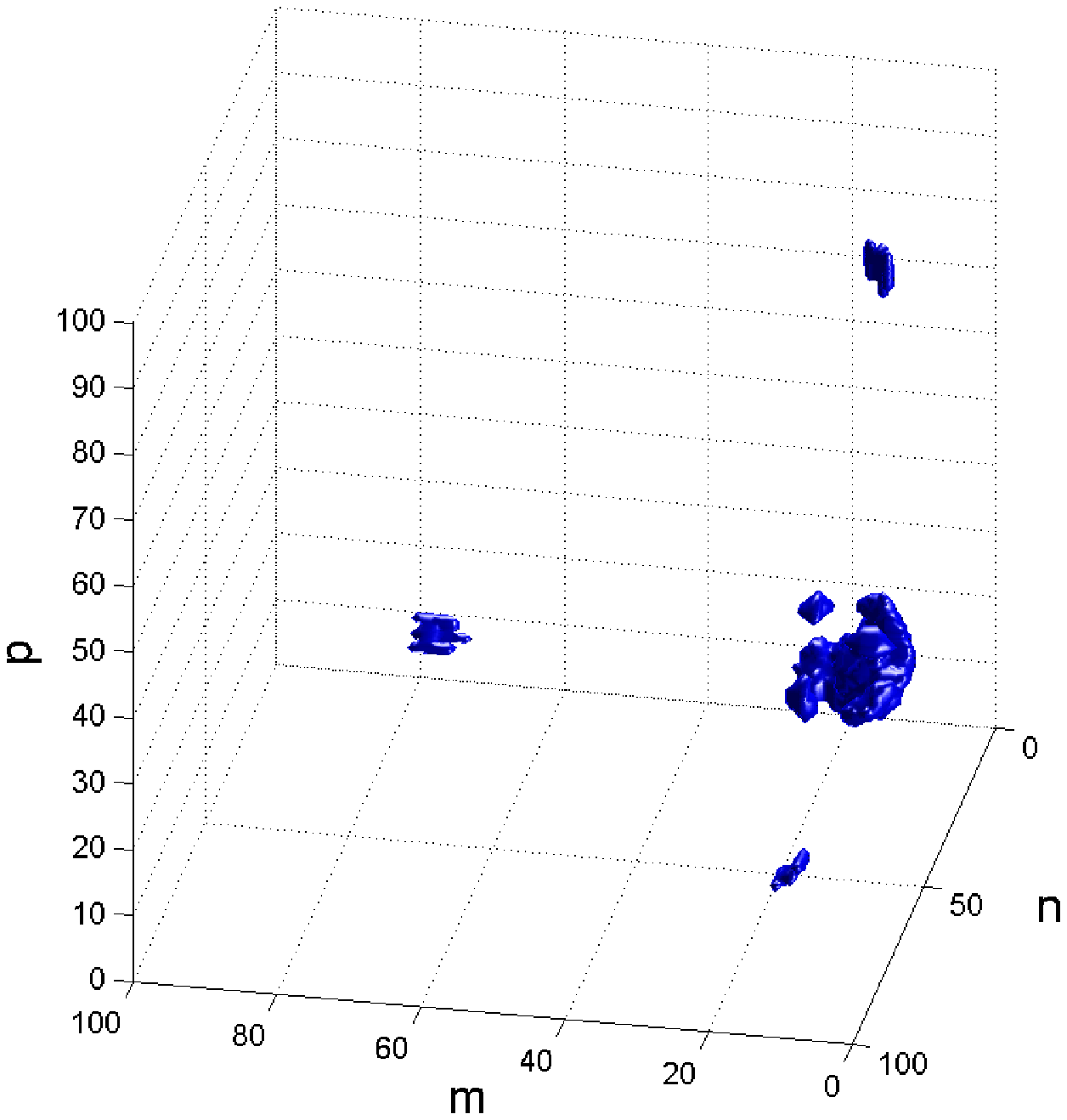}&\includegraphics[width=0.35\textwidth]{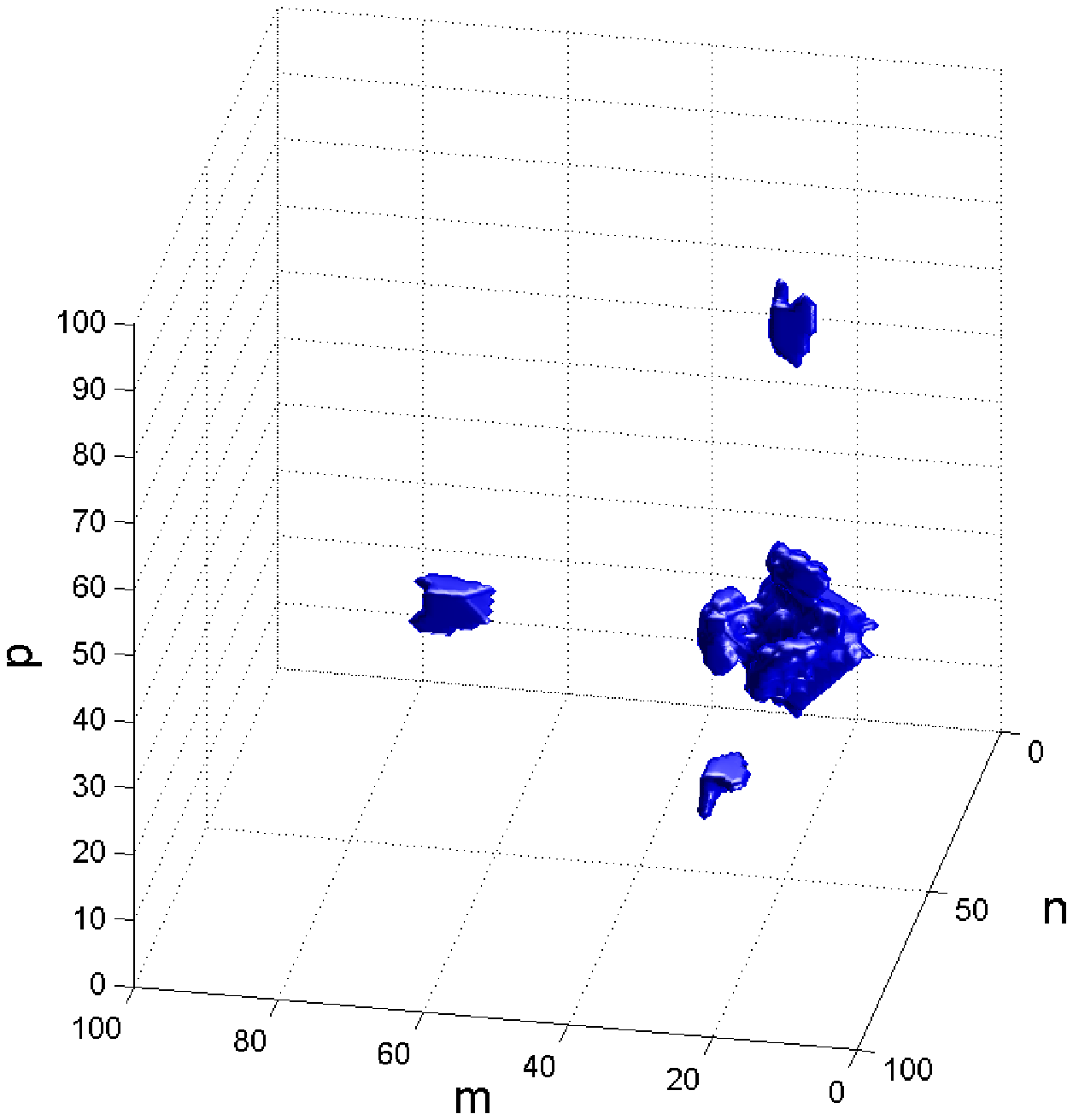}\\
$t = 10$ & $t = 12.5$\\
\includegraphics[width=0.35\textwidth]{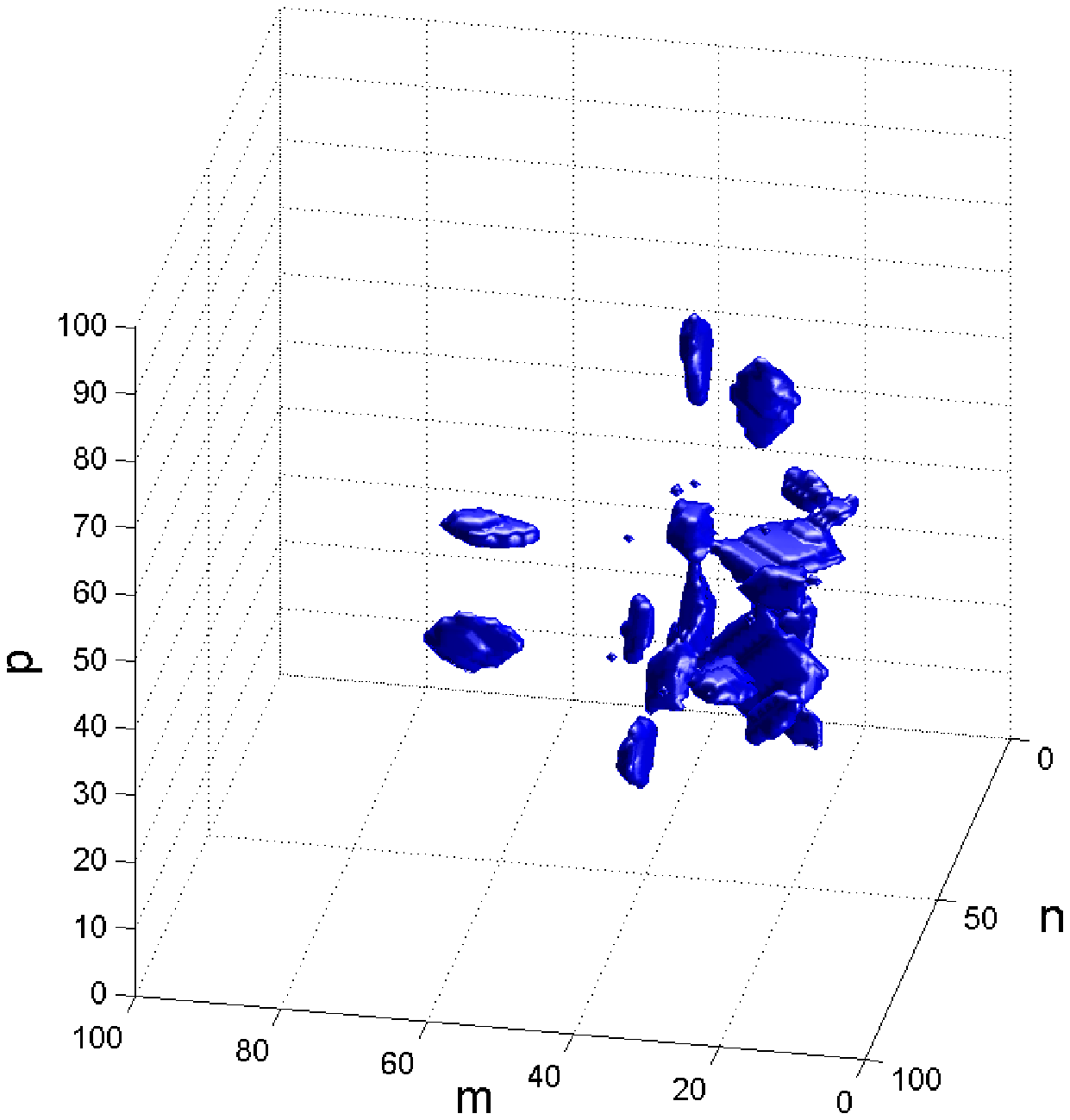}&\includegraphics[width=0.35\textwidth]{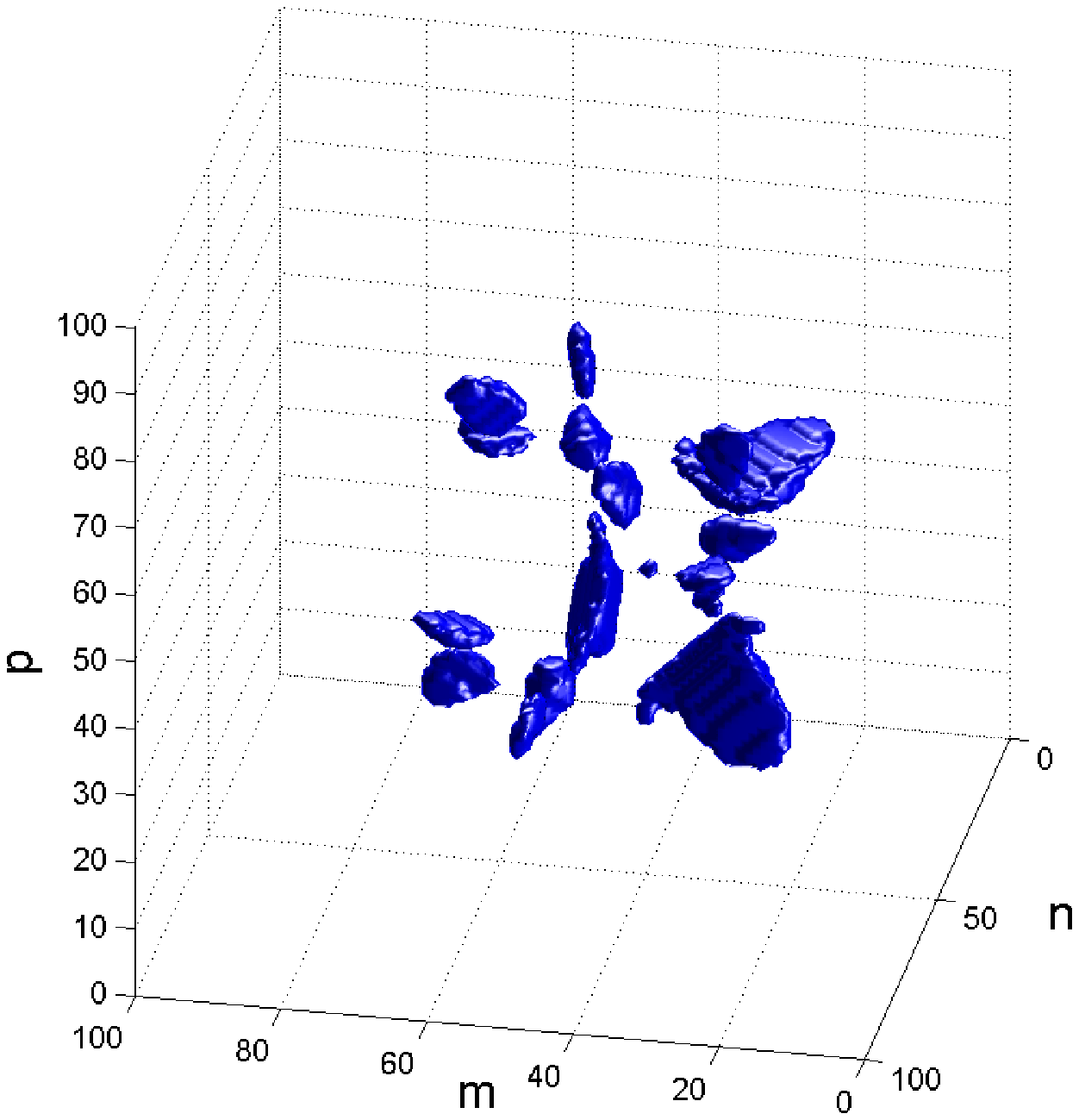}\\
$t = 15$ & $t = 17.5$\\
\includegraphics[width=0.35\textwidth]{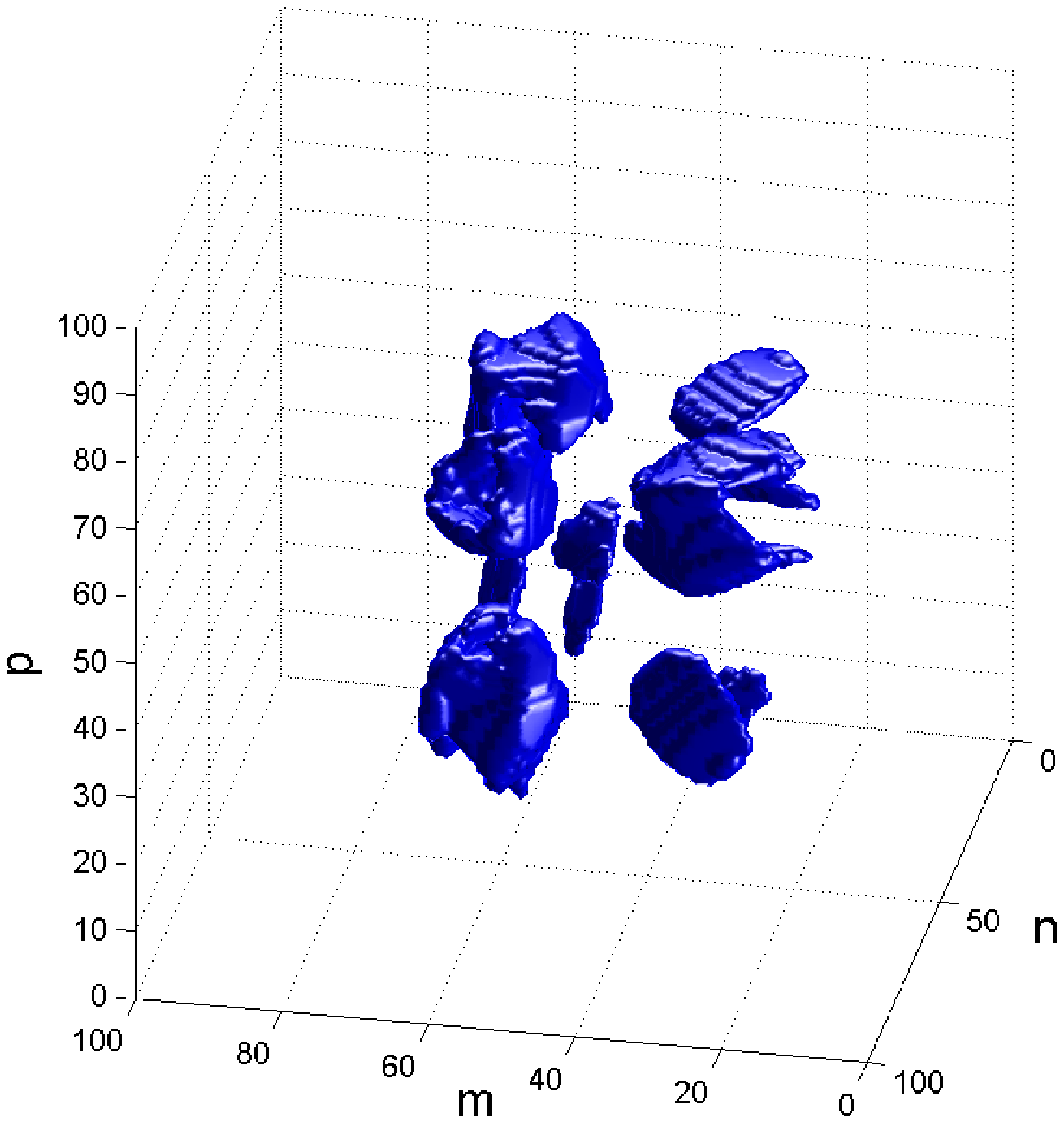}&\includegraphics[width=0.35\textwidth]{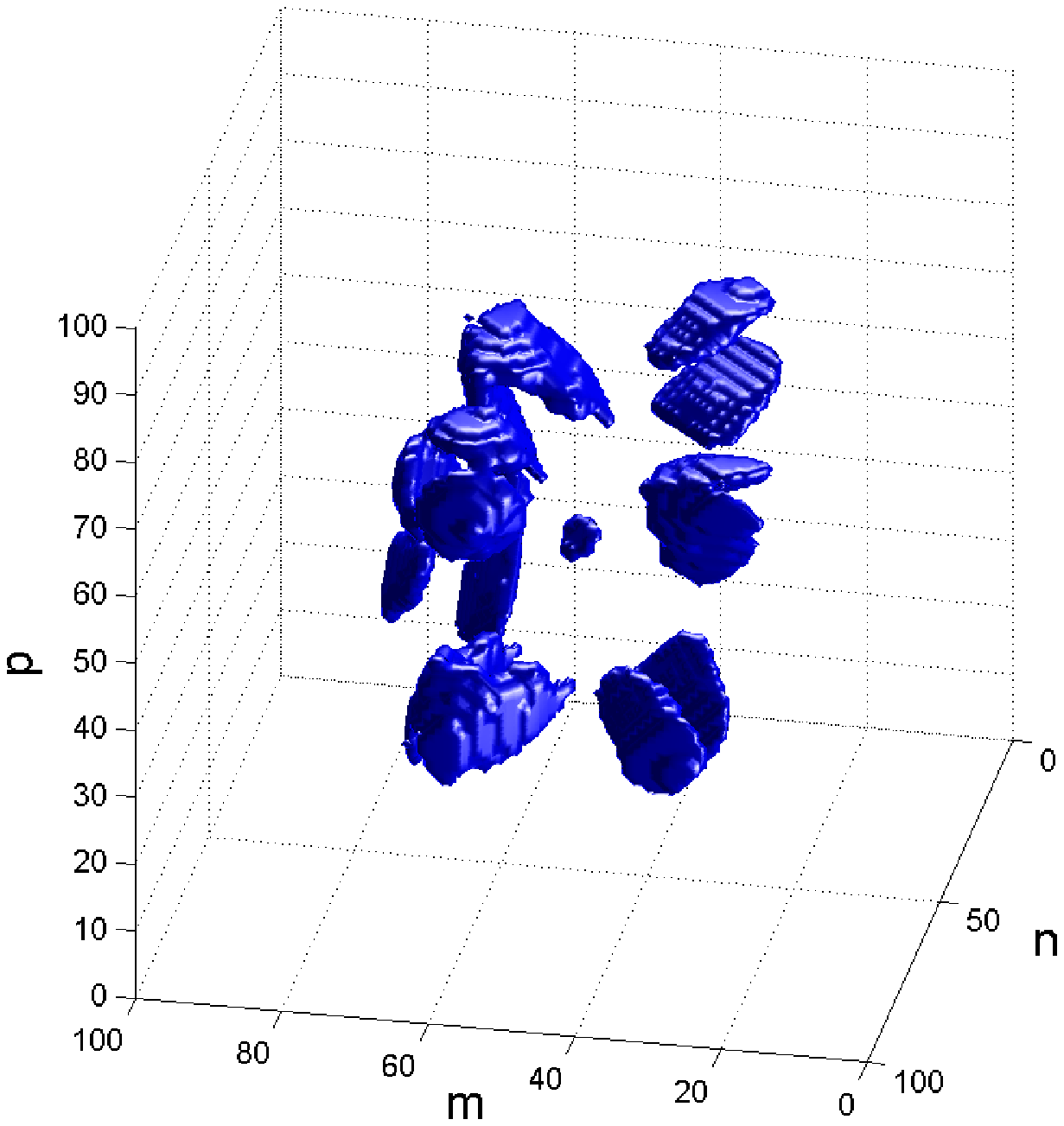}
\end{tabular}}
\caption{Time evolution of a localized solution of \eqref {Eqn:DiscreteMain} with $\gamma = 0.005$, $\beta = \mathfrak {m} ^2 = 0$ and $J = 0.01$, subject to harmonic driving with a frequency $\Omega = 0.9$ in the forbidden band-gap of the continuous-limit system. The driving function takes the form $\phi (t) = A (t) \sin (\Omega t)$ where $A$ is given by \eqref {Eqn:Amplitude}, and $b _i = 1$ for every $i = 1 , 2 , 3 , 4$. The graphs show the time evolution of regions of high energy. \label{Breathers}}
\end{figure}

We proceed to examine the sine-Gordon case next. Preliminary computational results show that, for a fixed frequency $\Omega = 0.9$ in the forbidden band-gap, the total energy of the system during a fixed period of time equal to $20$ increases smoothly with respect to $A$. Thus, we let $\Omega$ range between $0$ and $1$ and, for each pair $(\Omega , A)$, compute the associated total energy of the system. In this context, Fig. \ref {Fig12}(b) prescribes the total energy of the system versus $\Omega$ and $A$. Our results show that, the phenomenon of nonlinear supratransmission is absent in the case of radially symmetric sine-Gordon systems.

\subsection{Propagation of signals}
\label{SubSec:Signals}

The study of localized nonlinear modes in $(1 + 1)$-dimensional sine-Gordon systems is a topic of research that has produced a large amount of valuable results. Nowadays, the specialized literature in the field possesses results on this topic that range from the analytical aspects of the problem, to the numerical, to the physical, including those works where the propagation of localized modes are studied in relation with the process of nonlinear supratransmission \cite{Khomeriki,Chevrieux2,Macias-Signals}. 

\begin{figure}
\centerline{%
\includegraphics[width=0.5\textwidth]{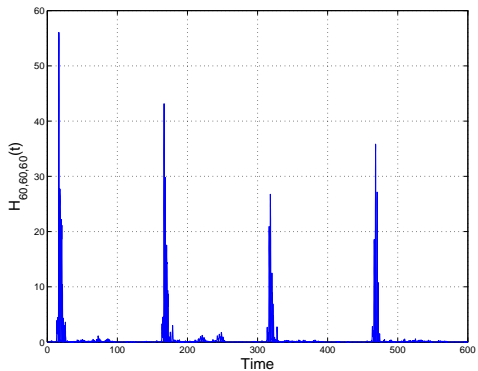}}
\caption{Time-dependent graph of local energy density $H$ of the node located at site $(60 , 60 , 60)$ in a medium described by \eqref {Eqn:DiscreteMain} with $\gamma = 0.005$, $\beta = \mathfrak {m} ^2 = 0$ and $J = 0.01$, as a consequence of driving it harmonically on the boundary at a frequency of $0.9$ in the forbidden band-gap of the continuous-limit medium, and a driving amplitude \eqref {Eqn:Amplitude} with $b _1 = b _2 = b _3 = b _4 = 1$. The peaks of the graph record the propagation of localized nonlinear modes produced by the four nonzero bits transmitted into the medium. \label{Prop}}
\end{figure}

In our study, it is particularly important to recall that supratransmission in semi-unbounded, sine-Gordon systems subject to Dirichlet harmonic driving has been characterized by the generation of moving breathers at the driving boundary once the driving amplitude has reached its critical value, and a method to control the propagation of these modes in such systems has been proposed \cite {Macias-Signals}. Thus, it seems natural to generalize this technique to the case of $(3 + 1)$-dimensional, semi-unbounded systems governed by sine-Gordon equations and subject to harmonic driving at the boundary.

Following the method proposed in \cite {Macias-Signals}, let us fix $\Omega < 1$, and let $\phi (t) = A (t) \sin (\Omega t)$. We let $P$ be a multiple of the driving period, assume that a bit $b \in \{ 0 , 1 \}$ will be transmitted into a medium \eqref {Eqn:DiscreteMain} during the period of time $[ 0 , P ]$, and suppose that $A _s$ represents the critical amplitude at which supratransmission starts for the frequency $\Omega$. Define the driving amplitude function $A _0$ as
\begin{equation}
A _0 (t ; b) = C b \left( e ^{- \Omega t / 2.5} - e ^{ - \Omega t / 0.45} \right),
\end{equation}
where $C$ is a positive real number depending on $\Omega$ that works as an amplification factor. The idea behind the definition of $A _0$ is that for the process of nonlinear supratransmission requires a certain amount of time to start to irradiate energy into the medium, during which the driving amplitude must take on values above the critical value $A _s$.

Let $\chi _A (t)$ represent the characteristic function on the set $A \subseteq \mathbb {R}$ evaluated at $t$, which is equal to $1$ if $t \in A$, and is equal to zero otherwise. In our study, we will fix $\gamma = 0.005$, $\beta = \mathfrak {m} ^2 = 0$ and $J = 0.01$, for which the value of amplitude at which supratransmission starts is $A _s = 1.41$. Numerically, we choose a step size for time equal to $0.005$, and fix a bounded cube of sides equal to $200$. Moreover, the period $P$ of signal generation will be equal to $150$, the amplification factor will be equal to $3$, and the binary sequence $(b _1 , \dots , b _k)$ will be transmitted into the medium by means of the harmonic driving function $\phi (t)$ with amplitude function defined by
\begin{equation}
A (t) = \sum _{i = 1} ^k A _0 ( t - (i - 1) P ; b _i) \chi _{[(i - 1) P , i P]} (t). \label{Eqn:Amplitude}
\end{equation}

The binary sequence $(1,1,1,1)$ will be transmitted into \eqref {Eqn:DiscreteMain} using the amplitude function just defined. In these conditions, Fig. \ref {Breathers} presents the time evolution of the local energy density of a localized solution of the medium as a result of being subject to harmonic driving with amplitude defined by \eqref {Eqn:Amplitude}, during the first period of generation of signals. The blue zones presented in these graphs are regions of high energy produced at the origin. As time evolves, these regions clearly expand and move away from the origin around the line $x = y = z = t$, for $t \in \mathbb {R} ^+$.

Finally, Fig. \ref {Prop} presents the time behavior of the local energy density at site $(60 , 60 , 60)$ in system \eqref {Eqn:DiscreteMain}, as a result of transmitting the binary code $(1 , 1 , 1 , 1)$ by means of perturbations on the boundaries. It is worth noticing that each of the peaks in the graph is a result of the localized traveling solutions --- produced by each of the nonzero bits generated at the boundaries ---, which passes by the node at site $(60 , 60 , 60)$ and moves away from the origin. Moreover, there exists a gap in time approximately equal to $150$ between two consecutive peaks, which is in perfect agreement with the value of the period $P$ of signal generation. The results evidence the possibility of accurately transmitting binary information into system \eqref {Eqn:Main} through suitable perturbations of the driving boundary.

It must be mentioned that similar results (not included here) are obtained for the approximation to the continuous case described by \eqref {Eqn:Main}, proving thus that the presence of nonlinear supratransmission in $(3 + 1)$-dimensional, dissipative sine-Gordon equations does not depend on discreteness.

\section{Conclusion}

In this work, we have presented conditionally stable, finite-difference schemes that consistently approximate the solutions to problems \eqref {Eqn:Main}, \eqref {Eqn:MainRad} and \eqref {Eqn:DiscreteMain}. Associated with these schemes, we have introduced discrete schemes to approximate consistently the local energy densities of the media and their total energy functions, in such way that the corresponding discrete rates of change of energy with respect to time consistently approximate their respective continuous rates of change of energy. In particular, if no dissipation is present and under suitable boundary conditions, the proposed methods are conservative.

Also, we have provided relevant numerical evidence that the process of nonlinear supratransmission is not present in media described by undamped radially symmetric sine-Gordon equations perturbed harmonically at the origin, proving thus that not every nonlinear system with a forbidden band-gap for the frequency in the linear dispersion relation is able to sustain this nonlinear process (contrary to a conjecture in the literature \cite {Geniet-Leon}). Our computations are supported empirically in the case of a sine-Gordon system by the fact the differential equation in \eqref {Eqn:MainRad} is approximately linear close to the origin, and analytically by the well-known fact that the origin of such systems is incapable of creating localized coherent structures. On the other hand, a similar three-dimensional medium (discrete or continuous), bounded in the first octant by the coordinate planes and subject to harmonic driving of the Dirichlet type on the boundaries, does exhibit supratransmission. Our results (presented in Section \ref {SubSec:Supra}) show a well-defined occurrence of the critical value at which supratransmission starts. 

It is interesting to notice that supratransmission in discrete $1$-dimensional chains of oscillators in achieved when the first oscillator is harmonically perturbed at a frequency in the forbidden band-gap. In a $2$-dimensional scenario, supratransmission is achieved when the boundary lines of a semi-unbounded domain are perturbed at the right frequency \cite {Chevrieux2}. Similarly, a semi-unbounded region in the $3$-dimensional case presents supratransmission when the boundary surfaces are perturbed at frequencies in the forbidden band-gap. Following this pattern, an $(n + 1)$-dimensional semi-unbounded system of oscillators described by coupled sine-Gordon equations may present supratansmission when the $n + 1$, $n$-dimensional boundaries are subject to harmonic driving with a frequency in the forbidden band-gap.

Another application to the transmission of localized nonlinear modes in $(3 + 1)$-dimensional systems governed by continuous sine-Gordon equations was provided in this work. The system was the same semi-unbounded medium studied before --- the medium governed by \eqref {Eqn:Main}, defined in the first octant and driven harmonically at the coordinate planes. By making use of nonlinear supratransmission, our results (summarized in Section \ref {SubSec:Signals}) show that a controlled propagation of wave signals can be achieved. Moreover, the propagating nodes are seen to be traveling breathers that move away from the origin on the line $x = y = z = t$, for $t \in \mathbb {R} ^+$, which is in perfect agreement with the $(1 + 1)$-dimensional scenario \cite {Macias-Signals}.

\section*{Acknowledgement}

The authors wish to thank the referees for their careful examination of the manuscript as well as for their useful comments and remarks, which led to a substantial improvement of the final product. One of us (JEMD) acknowledges support from Dr.~F.~J.~\'{A}lvarez Rodr\'{\i}guez, dean of the Faculty of Sciences of the Universidad Aut\'{o}noma de Aguascalientes, and Dr.~F.~J.~Avelar Gonz\'{a}lez, head of the Office for Research and Graduate Studies of the same university, in the form of computational resources to produce this article. The present work represents a set of partial results under project PIM08-1 at this university, and it was concluded during a visit of the author to the Tulane University of Louisiana during the winter of 2007--2008. The author also wishes to express his gratitude for the hospitality he enjoyed at Tulane.

\appendix

\section{Computational setting}

There are several important remarks on finite-difference scheme \eqref {Eqn:DiffEq2} associated with partial differential equation \eqref {Eqn:Main}. For the sake of simplification, we will assume that the spatial step-sizes $\Delta x$, $\Delta y$ and $\Delta z$ are equal, and that $N _x$, $N _y$ and $N _z$ are all equal to $N$. Moreover, we will let $J$ be equal to zero.
\begin{itemize}
\item First of all, if $\beta$ is equal to zero and $V ^\prime$ is identically equal to zero then the resulting differential equation is the damped, linear Klein-Gordon-like equation. In such case, the finite-difference method obtained is explicit.

\item If $\beta$ is equal to zero but $V ^\prime$ is not the function identically equal to zero, the partial differential equation obtained is a damped, nonlinear Klein-Gordon-like equation. In this case, method \eqref {Eqn:DiffEq2} is nonlinear and explicit; in fact, an application of Newton's method is needed to obtain the value $u _{m , n , p} ^{k + 1}$ from the known approximations at times $k - 1$ and $k$, for every $m , n , p = 1 , 2 , \dots , N$ (this was the case when performing the application in Section \ref {SubSec:Supra}).

\item Let $\beta$ be a positive real number. If $V ^\prime$ is identically equal to zero then the resulting partial differential equation is a damped, linear equation, while the finite-difference scheme associated with it is likewise linear and implicit. Meanwhile, if $V ^\prime$ is not identically equal to zero then an application of Newton's method is indispensable; we will describe this last scenario in more detail now.

Notice first of all that the algorithm of division implies that, for every positive integer $i = 1 , 2 , \dots , (N + 2) ^3$, there exist unique nonnegative integers $m$, $n$ and $p$ such that 
\begin{equation}
i = m (N + 2) ^2 + n (N + 2) + p + 1.
\end{equation}
Conversely, if $m , n , p \in \{ 0 , 1 , \dots , N + 1 \}$ then the value of $i$ given by the formula above is in the set $\{ 1 , 2 , \dots , (N + 2) ^3 \}$. Thus the term $u _{m , n , p} ^k$ may be unambiguously represented by $y _i ^k$, and the left-hand side of Eq. \eqref {Eqn:DiffEq2} will be denoted by $f _i ^k$.

Following Newton's method, the approximations at times $k - 1$ and $k$ are assumed to be known in order to compute the approximation at the $(k + 1)$st time. The Jacobian of the problem is a sparse matrix; indeed, notice that for each triplet $(m , n , p)$ with $m , n , p = 1 , 2 , \dots , N$, the following are the only nonzero partial derivatives:
\begin{equation}
\frac {\partial f _i ^k} {\partial y _{i + 1}} = \frac {\partial f _i ^k} {\partial y _{i - 1}} = \frac {\partial f _i ^k} {\partial y _{i + N + 2}} = \frac {\partial f _i ^k} {\partial y _{i - (N + 2)}} = \frac {\partial f _i ^k} {\partial y _{i + (N + 2) ^2}} = \frac {\partial f _i ^k} {\partial y _{i - (N + 2) ^2}} = \frac {\beta} {2 \Delta t \left( \Delta x \right) ^2}
\end{equation}
and
\begin{equation}
\frac {\partial f _i} {\partial y _i} (z) = \frac {1} {(\Delta t) ^2} + \frac {3 \beta} {\Delta t (\Delta x) ^2} + \frac {\gamma} {2 \Delta t} + \frac {\mathfrak {m} ^2} {2} + \frac {(z - u _i ^{k - 1}) V ^\prime (z) + V (u _i ^{k - 1}) - V (z)} {(z - u _i ^{k - 1}) ^2}.
\end{equation}
\end{itemize}


\begin{thebibliography}{10}
\bibitem{Geniet-Leon}
{F. Geniet and J. Leon},
\newblock Energy transmission in the forbidden band gap of a nonlinear chain.
\newblock {\em Phys. Rev. Lett.}, {\bf 89}:134102, 2002.

\bibitem{Geniet-Leon2}
{F. Geniet and J. Leon},
\newblock Nonlinear supratransmission.
\newblock {\em J. Phys.: Condens. Matter}, {\bf 15}:2933--2949, 2003.

\bibitem{Khomeriki}
{R. Khomeriki, S. Lepri, and S. Ruffo},
\newblock Nonlinear supratransmission and bistability in the {Fermi-Pasta-Ulam}
  model.
\newblock {\em Phys. Rev. E}, {\bf 70}:066626, 2004.

\bibitem{Leon-Spire}
{J. Leon and A. Spire},
\newblock Gap soliton formation by nonlinear supratransmission in {Bragg}
  media.
\newblock {\em Phys. Lett. A}, {\bf 327}:474--480, 2004.

\bibitem{Khomeriki-Leon}
{R. Khomeriki and J. Leon},
\newblock Bistability in {sine-Gordon}: The ideal switch.
\newblock {\em Phys. Rev. E}, {\bf 71}:056620, 2005.

\bibitem{Chevrieux2}
{D. Chevrieux, R. Khomeriki, and J. Leon},
\newblock Theory of a {Josephson} junction parallel array detector sensitive to
  very weak signals.
\newblock {\em Phys. Rev. B}, {\bf 73}:214516, 2006.

\bibitem{Khomeriki-Leon2}
{R. Khomeriki, J. Leon, and D. Chevriaux},
\newblock Quantum hall bilayer digital amplifier.
\newblock {\em Euro. Phys. J. B}, {\bf 49}:213--218, 2006.

\bibitem{Chevriaux}
{D. Chevriaux, R. Khomeriki, and J. Leon},
\newblock Bistable transmitting nonlinear directional couplers.
\newblock {\em Mod. Phys. Lett. B.}, {\bf 20}:515--532, 2006.

\bibitem{Khomeriki2}
{R. Khomeriki},
\newblock Nonlinear band gap transmission in optical waveguide arrays.
\newblock {\em Phys. Rev. Lett.}, {\bf 92}:063905, 2004.

\bibitem{Khomeriki-Ruffo}
{R. Khomeriki and S. Ruffo},
\newblock Nonadiabatic {Landau-Zener} tunneling in waveguide arrays with a step
  in the refracting index.
\newblock {\em Phys. Rev. Lett.}, {\bf 94}:113904, 2005.

\bibitem{Sanz}
{J. M. Sanz-Serna},
\newblock Symplectic operators for {Hamiltonian} problems: an overview.
\newblock {\em Acta Numer.}, {\bf 1}:243--286, 1992.

\bibitem{Ring-shaped}
{P. L. Christiansen and O. H. Olsen},
\newblock Ring-saped quasi-soliton solutions to the two- and three-dimensional
  {sine-Gordon} equation.
\newblock {\em Phys. Scr.}, {\bf 20}:531--538, 1979.

\bibitem{Multi-soliton}
{S. Takeno},
\newblock Multi-(resonant-soliton)-soliton solutions an vortex-like solutions
  to two- and three-dimensional {sine-Gordon} equations.
\newblock {\em Prog. Theo. Phys.}, {\bf 68}:992--995, 1982.

\bibitem{N-layer}
{U. D. Jentschura, I. N\'{a}ndori, and J. Zinn-Justin},
\newblock Effective action and phase structure of multi-layer {sine-Gordon}
  type models.
\newblock {\em Annals Phys.}, {\bf 321}(11):2647--2659, 2006.

\bibitem{Plasma}
{N. F. Pedersen and S. Sakai},
\newblock Plasma resonance and flux dynamics in layered {high-T$_c$}
  superconductors.
\newblock {\em Phys. C Supercond. Appl.}, {\bf 332}(1):297--301, 2000.

\bibitem{Superluminal}
{S. V. Sazonov},
\newblock Superluminal electromagnetic solitons in nonequilibrium media.
\newblock {\em Phys. Usp.}, {\bf 44}(6):631--644, 2001.

\bibitem{Kudriatsev}
{A. E. Kudryavtsev},
\newblock Solitonlike solutions for a {Higgs} scalar field.
\newblock {\em JETP Lett.}, {\bf 22}(3):82--83, 1975.

\bibitem{Solitons}
{M. Remoissenet},
\newblock {\em {Waves Called Solitons}}.
\newblock Springer-Verlag, New York, third edition, 1999.

\bibitem{Jorgens}
{K. J\"{o}rgens},
\newblock {Das Anfangswertproblem im Grossen f\"{u}r eine Klasse nichtlinearer
  Wellengleichungen}.
\newblock {\em Math. Zeit.}, {\bf 77}:295--308, 1961.

\bibitem{Segal}
{I. E. Segal},
\newblock The global {Cauchy} problem for a relativistic scalar field with power
  interaction.
\newblock {\em Bull. Soc. Math. Fr.}, {\bf 91}:129--135, 1963.

\bibitem{Morawetz}
{C. S. Morawetz and W. A. Strauss},
\newblock Decay and scattering of solutions of a nonlinear relativistic wave
  equation.
\newblock {\em Comm. Pure and Appl. Math.}, {\bf 25}:1--31, 1972.

\bibitem{Glassey}
{R. M. Glassey},
\newblock Blow-up theorems for nonlinear wave equations.
\newblock {\em Math. Zeit.}, {\bf 132}:183--203, 1973.

\bibitem{Barone}
{A. Barone, F. Esposito, C. J. Magee, and A. C. Scott},
\newblock Theory and applications of the {sine-Gordon} equation.
\newblock {\em Riv. Nuovo Cim.}, {\bf 1}:227--267, 1971.

\bibitem{Scott}
{A. C. Scott},
\newblock A nonlinear {Klein-Gordon} equation.
\newblock {\em Amer. J. Phys.}, {\bf 37}:52--61, 1969.

\bibitem{Lomdahl}
{P. S. Lomdahl, O. H. Soerensen, and P. L. Christiansen},
\newblock Soliton excitations in {Josephson} tunnel junctions.
\newblock {\em Phys. Rev. B}, {\bf 25}:5737--5748, 1982.

\bibitem{Rubino}
{B. Rubino},
\newblock Weak solutions to quasilinear wave equations of {Klein-Gordon} or
  {sine-Gordon} type and relaxation to reaction-diffusion equations.
\newblock {\em Nonlinear Diff. Eq. Appl}, {\bf 4}:439--457, 1997.

\bibitem{Boling}
{G. Boling and L. Yongsheng},
\newblock Attractor for dissipative {Klein–Gordon–Schrödinger} equations in
  $\mathbb {R} ^3$.
\newblock {\em J. Diff. Eq.}, {\bf 136}:356--377, 1997.

\bibitem{Morales}
{L. Morales-Molina, N. R. Quintero, and A. S\'{a}nchez},
\newblock Soliton ratchets in homogeneous nonlinear {Klein-Gordon} systems.
\newblock {\em Chaos}, {\bf 16}:013117, 2006.

\bibitem{Maccari}
{A. Maccari},
\newblock Solitons trapping for the nonlinear {Klein–Gordon} equation with an
  external excitation.
\newblock {\em Chaos, Solitons and Fractals}, {\bf 17}:145--154, 2003.

\bibitem{Ha}
{J. Ha and S.-I. Nakagiri},
\newblock Identification problems for the damped {Klein-Gordon} equations.
\newblock {\em J. Math. Anal. Appl.}, {\bf 289}:77--89, 2004.

\bibitem{Biler}
{P. Biler},
\newblock Regular decay of solutions of strongly damped nonlinear hyperbolic
  equations.
\newblock {\em Applic. Anal.}, {\bf 32}:277--285, 1989.

\bibitem{Pucci}
{P. Pucci and J. Serrin},
\newblock Asymptotic stability for nonautonomous dissipative wave systems.
\newblock {\em Comm. Pure Appl. Math.}, {\bf 49}:177--216, 1996.

\bibitem{Park}
{J. Y. Park and J. U. Jeong},
\newblock Optimal control of damped {Klein–Gordon} equations with state
  constraints.
\newblock {\em J. Math. Anal. Appl.}, {\bf 334}:11--27, 2007.

\bibitem{Bahuguna}
{D. Bahuguna},
\newblock Strongly damped semilinear equations.
\newblock {\em J. Appl. Math. Stoch. Anal.}, {\bf 8}:397--404, 1995.

\bibitem{Zauderer}
{E. Zauderer},
\newblock {\em {Partial Differential Equations of Applied Mathematics}}.
\newblock John Wiley \& Sons, New York, third edition, 2006.

\bibitem{Macias-Supra}
{J. E. Mac\'{\i}as-D\'{\i}az and A. Puri},
\newblock An energy-based computational method in the analysis of the
  transmission of energy in a chain of coupled oscillators.
\newblock {\em J. Comp. Appl. Math.}, {\bf 214}:393--405, 2008.

\bibitem{Thomas}
{J. W. Thomas},
\newblock {\em {Numerical Partial Differential Equations}}.
\newblock Springer-Verlag, New York, first edition, 1995.

\bibitem{StraussVazquez}
{W. A. Strauss and L. V\'{a}zquez},
\newblock Numerical solution of a nonlinear {Klein-Gordon} equation.
\newblock {\em J. Comput. Phys.}, {\bf 28}:271--278, 1978.

\bibitem{Macias-Puri}
{J. E. Mac\'{\i}as-D\'{\i}az and A. Puri},
\newblock A numerical method for computing radially symmetric solutions of a
  dissipative nonlinear modified {Klein-Gordon} equation.
\newblock {\em Num. Meth. Part. Diff. Eq.}, {\bf 21}:998--1015, 2005.

\bibitem{Macias-Signals}
{J. E. Mac\'{\i}as-D\'{\i}az and A. Puri},
\newblock An application of nonlinear supratransmission to the propagation of
  binary signals in weakly damped, mechanical systems of coupled oscillators.
\newblock {\em Phys. Lett. A}, {\bf 366}:447--450, 2007.
\end{thebibliography}
\end{document}